\newtheorem{theorem}{Theorem} [section]
\newtheorem{corollary} [theorem] {Corollary}
\newtheorem{prop} [theorem] {Proposition}
\newtheorem{lemma} [theorem] {Lemma}
\theoremstyle {definition}
\theoremstyle{remark}
\newtheorem{remark}[theorem]{Remark}
\newtheorem{example}[theorem]{Example}
\newcommand{\alert}{}
\renewcommand{\Re}{\operatorname{Re}}
\newcommand{\dd}{ { \mathrm{d}} }
\newcommand{\TV}{\mathrm{TV}}
\DeclareMathOperator{\supp}{supp}
\newcommand{\R}{\mathbb R}
\title[Higher order Gr\"unwald approximations]{Higher order Gr\"unwald approximations of fractional derivatives and fractional powers of operators}
\author[B. Baeumer]{Boris Baeumer}
\author[M.~Kov\'acs]{Mih\'aly Kov\'acs}
\author[H. Sankaranarayanan]{Harish Sankaranarayanan}
\keywords{Fractional Derivatives, Gr\"unwald Formula, Fourier Multipliers, Carlson's Inequality, Fractional Differential Equations, Fractional Powers of Operators}
\subjclass{}
\begin{document}

\begin{abstract}
We give stability and consistency results for higher order Gr\"unwald-type formulae used in the approximation of solutions to fractional-in-space partial differential equations. We use a new Carlson-type inequality for periodic Fourier multipliers to gain regularity and stability results. We then generalise the theory to the case where the first derivative operator is replaced by the generator of a bounded group on an arbitrary Banach space.
\end{abstract}

\date{\today}

\maketitle

\section{Introduction}
\label{intro}
In a series of articles Meerschaert, Scheffler and Tadjeran \cite{Meerschaert2006f,Tadjeran2007,Meerschaert2004d,Meerschaert2006e,Tadjeran2006} explored consistency and stability for numerical schemes for fractional-in-space partial differential equations using a shifted Gr\"unwald formula to approximate the fractional derivative. In particular, in \cite{Tadjeran2006} they showed consistency if the order of the spatial derivative is less or equal to 2. They obtained a specific error term expansion for $f\in C^{4+n}(\R)$, where $n$ is the number of error terms, as well as stability for the Crank-Nicolson scheme using Gershgorin's theorem to determine the spectrum of the Gr\"unwald matrix.  Richardson extrapolation then gave second order convergence in space and time of the numerical scheme.

{\alert In this article we explore convergence with error estimates for higher order Gr\"unwald-type approximations of semigroups generated first by a fractional derivative operator on $L^1(\R)$ and then, using a transference principle, by fractional powers of group or semigroup generators on arbitrary Banach spaces.}

It was already shown in \cite[Proposition 4.9]{Baeumer2009a}  that for all
\begin{equation*}
f\in X_{\alpha}(\mathbb{R}):=\{f\in L_1(\R):\exists g\in L_1(\R) \;\mathrm{ with }\;\hat g(k)=(-ik)^\alpha \hat f(k), k\in\R\}
 \end{equation*}
 the first order Gr\"unwald scheme
 \begin{equation}\label{A_hIntro}
A^\alpha _{h,p} f(x)=\frac{1}{\Gamma (-\alpha)} \frac {1}{h^\alpha} \sum_{m=0}^\infty \frac{\Gamma(m-\alpha )}{\Gamma (m+1)} f(x-(m-p)h)
\end{equation}
converges in $L_1(\R)$ to $f^{(\alpha)}$ as $h\to 0+$ and any shift $p\in \R$.
Here
$\hat g(k)=\int_{-\infty}^\infty e^{ikx}g(x)\,dx$ denotes the Fourier transform of $g$ and for $f\in X_\alpha(\R)$, $f^{(\alpha)}=g$ iff $(-ik)^\alpha\hat f=\hat g$. In Section 3 we develop higher order Gr\"unwald-type approximations $\tilde A_h^\alpha $.  In Corollary \ref{HigherOrderConsistency} we can then give the consistency error estimate
\begin{equation}\label{introhighOrder}\left\|\tilde A_h^\alpha f-f^{(\alpha)}\right\|_{L_1(\R)}\le Ch^{n}\|f^{(\alpha+n)}\|_{L_{1}(\R)}\end{equation}
for an $n$-th order scheme.

{\alert Using a new Carlson-type inequality for periodic multipliers developed in Section 2 (Theorem \ref{thm:eta}) we investigate the stability and smoothing of certain approximation schemes $\tilde A_h^\alpha$ in Section 4.} The main tool is Theorem \ref{generalresult} which gives a sufficient condition for multipliers associated with difference schemes approximating the fractional derivative to lead to stable schemes with desirable smoothing. In particular, we show in {\alert Proposition \ref{thm:spg}} that stability for a numerical scheme using \eqref{A_hIntro} to solve the Cauchy problem
\begin{equation}\label{CPx}\frac{\partial }{\partial t}u(t,x)=(-1)^{q+1}\frac{\partial^\alpha}{\partial x^\alpha} u(t,x);u(0,x)=f(x)\end{equation} with $2q-1<\alpha<2q+1,q\in \mathbb N$ can only be achieved for a unique shift $p$; i.e.  it is necessary that $p=q$ for $(-1)^{q+1}A_{h,p}^\alpha$ to generate bounded semigroups on $L_1(\R)$ where the bound is uniform in $h$. Furthermore, in Theorem \ref{4.5} we prove stability and smoothing of a second order scheme.

Developing the theory in $L_1$ allows {\alert in Section 5} the transference of the theory to fractional powers of the generator $-A$ of a strongly continuous (semi-)group $G$ on a Banach space $(X,\|\cdot\|)$, noting that $f(x-(m-p)h)$ in \eqref{A_hIntro} will read as $G((m-p)h)f$ \cite{Baeumer2009a}. The abstract Gr\"unwald approximations with the optimal shifts generate
  analytic semigroups, uniformly in $h$, as shown in Theorem \ref{groupthm}.
  This is the main property needed in Corollary \ref{highOrder} to show that the error between the solution $S_\alpha(t)f=e^{t(-1)^{q+1}A^\alpha}f$ and a fully discrete solution $u_n$ obtained via a Runge-Kutta method with stage order $s$, order $r\ge s+1$ and an $N+1$ order Gr\"unwald approximation is bounded by
$$
\|S_{\alpha}(t)f-u_n\|\le C\left(n^{-r}\|f\|+h^{N+1}\left|\log\frac{t}{h^{\alpha}}\right|\,\|A^{N+1}f\|\right),~h>0,~t=n \tau.
$$
Note that this yields error estimates of our numerical approximation schemes applied to \eqref{CPx} in spaces where the translation semigroup is strongly continuous, such as $L_p(\R)$, $1\le p<\infty$, $BUC(\R)$, $C_0(\R)$, etc. Using the abstract setting we can also conclude that the consistency error estimate \eqref{introhighOrder} holds in those spaces, with the $L_1$ norm replaced by the appropriate norm.

{\alert Finally, in Section 6,  we give results of some numerical experiments, including a third order scheme, highlighting the efficiency of the higher order schemes and the dependence of the convergence order on the smoothness of the initial data.}

\section{Preliminaries}\label{sec:pre}
 A measurable function $\psi : \mathbb{R} \rightarrow \mathbb{C}$ is called an \textit{$L_1$-multiplier}, if for all $f \in L_1=L_1(\mathbb{R})$ there exists a $v \in L_1$ such that $ \psi \hat{f} = \hat{v}$ where $\hat{f}(k)=\int_{-\infty} ^\infty e^{ikx}f(x)\,\dd x$ denotes the Fourier transform of $f$. Define an operator, using the uniqueness of Fourier transforms, $T_\psi:L_1 \rightarrow L_1$ by $T_\psi f:= v$ where $v$ is defined as above. It is well known that $T_\psi$ is a closed operator and since it is everywhere defined, by the closed graph theorem, it is bounded. Moreover, if $\psi$ is an $L_1$-multiplier then $\psi(k) = \hat{\mu}(k)=\int_{-\infty} ^\infty e^{ikx}\,\dd \mu(x)$ for some bounded Borel measure $\mu$ and $\left\|T_\psi\right\|_{\mathcal{B} ({L}_1)} = \left\|\mu\right\|_{\TV}$ where $\TV$ refers to the total variation of the measure and $\left\|\cdot\right\|_{\mathcal{B} ({L}_1)}$ is the operator norm on $L_1$. Furthermore, if the measure $\mu$ has a density distribution $g$ then
\begin{equation}
\label{normdensity}
\left\|T_\psi\right\|_{\mathcal{B} (L_1)} = \left\|g\right\|_{L_1}=\left\|\check{\psi}\right\|_{L_1},
\end{equation}
where $\check{\psi}(x)=\frac{1}{2\pi}\int_{-\infty} ^\infty e^{-ikx}g(k)\,\dd k$ denotes the inverse Fourier transform of $\psi.$ In the sequel we will also make use of the following scaling property. If $a\in \mathbb{R}, ~h>0,$ and
$\psi^a(k)=\psi(k+a)$ and $\psi_h (k)= \psi(hk)$, then
\begin{equation}
\label{multipliernormequivalence}
\left\|T_\psi\right\|_{\mathcal{B} (L_1)}=\left\|T_{\psi_{h}}\right\|_{\mathcal {B} (L_1)}=\left\|T_{\psi^a}\right\|_{\mathcal {B} (L_1)}.
\end{equation}

The following inequality is of crucial importance in our error analysis. It is a special case of a more general Carlson type inequality, see \cite[Theorem 5.10, p.107]{Carlsonbook}. We give an elementary proof here to keep the presentation self contained. The case $r=2$ is referred to as Carlson-Beurling Inequality, and for a proof see \cite[p.429]{olivebook} and \cite{Car}.

The space $W_r^1(\mathbb{R})$, $r\ge 1$, denotes the Sobolev space of
$L_r(\mathbb{R})$-functions with generalised first derivative in $L_r(\mathbb{R})$; that is, $f\in W_r^1(\mathbb{R})$ if $f\in L_r(\mathbb{R})$, $f$ is locally absolutely continuous, and $f'\in L_r(\mathbb{R})$.

\begin{prop}[Carlson-type inequality]
\label{carlson'sinequality}
If $g\in W_r^1(\mathbb{R})$, $1<r\le 2$, then there exists $f \in L_1$ and a constant $C=C(r)>0$, independent of $f$ and $g$, such that $\hat{f}=g\; \text{a.e.}$ and
\begin{equation}\label{eq:car}
\left\| f \right\|_{L_1}\leq C \left\| g \right\|^{\frac{1}{s}}_{L_r} \left\| g'\right\|^{\frac{1}{r}}_{L_r},
\end{equation}
where $\frac{1}{r}+\frac{1}{s}=1.$
\end{prop}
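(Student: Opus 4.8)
The plan is to mimic the classical proof of the Carlson--Beurling inequality (the case $r=2$), with Plancherel's theorem replaced by the Hausdorff--Young inequality. I would first reduce to the case where $g$ is a Schwartz function: the Schwartz class is dense in $W_r^1(\mathbb{R})$, so once the asserted estimate is known for Schwartz $g$ (with $f=\check g$), a general $g\in W_r^1(\mathbb{R})$ can be approximated by Schwartz functions $g_n\to g$ in $W_r^1$; applying the estimate to $g_n-g_m$ shows $(\check g_n)$ is Cauchy in $L_1$, so $\check g_n\to f$ in $L_1$ for some $f\in L_1$, and since $\widehat{\check g_n}=g_n\to g$ in $L_r$ while $\widehat{\check g_n}\to\hat f$ uniformly, we get $\hat f=g$ a.e.; the estimate then passes to the limit.

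So let $g$ be Schwartz and $f=\check g$. Two applications of Hausdorff--Young (for the inverse transform, which maps $L_r\to L_s$ for $1<r\le 2$) give
$$\|f\|_{L_s}\le C\|g\|_{L_r}\qquad\text{and}\qquad \|xf\|_{L_s}\le C\|g'\|_{L_r},$$
the second because $\widehat{xf}=-i g'$, i.e.\ $xf=(-ig')^\vee$. Then for any $R>0$ I split
$$\|f\|_{L_1}=\int_{|x|\le R}|f(x)|\,\dd x+\int_{|x|>R}|f(x)|\,\dd x$$
and apply H\"older's inequality with exponents $s$ and $r$ to each term. The first term is at most $(2R)^{1/r}\|f\|_{L_s}$; for the second I write $|f(x)|=|x|^{-1}\,|xf(x)|$ and use $\int_{|x|>R}|x|^{-r}\,\dd x=\frac{2}{r-1}R^{1-r}$, which is finite precisely because $r>1$, to bound it by $\big(\tfrac{2}{r-1}\big)^{1/r}R^{\frac1r-1}\|xf\|_{L_s}$. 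Combining the pieces,
$$\|f\|_{L_1}\le C\Big(R^{1/r}\|g\|_{L_r}+R^{\frac1r-1}\|g'\|_{L_r}\Big).$$

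It remains to optimise over $R>0$. The right-hand side is minimised at $R=(r-1)\|g'\|_{L_r}/\|g\|_{L_r}$, and at this value both terms are comparable to $\|g\|_{L_r}^{1/s}\|g'\|_{L_r}^{1/r}$, which gives \eqref{eq:car} with an explicit constant of the form $C(r)=\big((r-1)^{1/r}+(r-1)^{-1/s}\big)\cdot(\text{Hausdorff--Young constant})$. (The degenerate cases $\|g\|_{L_r}=0$ or $\|g'\|_{L_r}=0$ follow by letting $R\to\infty$ or $R\to 0$, or are trivial.) The only genuinely delicate step is the density reduction --- ensuring the $L_1$-limit $f$ actually satisfies $\hat f=g$ --- which is taken care of by the uniform convergence $\widehat{\check g_n}\to\hat f$; everything else is the weighted H\"older splitting and a one-variable minimisation.
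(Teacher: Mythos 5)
Your proof is correct and is essentially the paper's own argument: both rest on the Hausdorff--Young inequality applied to $g$ and $g'$, a H\"older splitting of the $L_1$-integral at a threshold, and optimisation of that threshold (the paper simply plugs in the optimal value $v=\|g'\|_{L_r}/((r-1)^{1/r}\|g\|_{L_r})$ from the start, and works with $\hat g$ rather than $\check g$, which is a cosmetic difference). The density reduction to Schwartz functions is an extra layer of care the paper omits, but it does not change the substance of the argument.
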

\begin{proof}
First, since $g \in L_r(\mathbb{R}),$ $f := \check{g} \in L_s(\mathbb{R}).$ Also recall the Hausdorff-Young-Titchmarsh inequality \cite[p.211]{greenfourierbook}
\begin{equation}
\label{HYT}
\left\|\hat u \right\|_{L_s} \leq \sqrt{2 \pi} \left\| u \right\|_{L_r}, u \in L_r, ~1\le r\le 2,~\frac{1}{r}+\frac{1}{s}=1.
\end{equation}
Note that $\left\|f\right\|_{L_1}=\left\|\check{g}\right\|_{L_1}=\frac{1}{2 \pi} \left\|\hat{g}\right\|_{L_1}$ and $\hat {g'}(x)=(-ix) \hat g(x).$ If $g\equiv 0$ there is nothing to prove. Otherwise, using H\"older's inequality, \eqref{HYT}, and setting $v=\frac{\left\|g'\right\|_{L_r}}{(r-1)^{1/r}\left\|g\right\|_{L_r}}$ we get
\begin{align*}
\left\| f \right\|_{L_1}= \frac{1}{2 \pi} \left\| \hat{g} \right\|_{L_1}
											&	=  \frac{1}{2 \pi}\left(\int_{\left|x\right|\leq v} \left|\hat{g} (x)\right| dx +\int_{\left|x\right| > v} \left|\frac{1}{x} (x \hat{g} (x))\right| dx \right)\\
												&\leq \frac{2^{1/r}}{2 \pi}\left( v^{\frac{1}{r}}\left\|\hat{g}\right\|_{L_s}+ v^{\frac{-1}{s}}\left\|(\cdot)\hat{g}(\cdot)\right\|_{L_s}(r-1)^{-1/r} \right)\\
												&\leq  \frac{2^{1/r}}{\sqrt{2 \pi}}\left( v^{\frac{1}{r}}\left\|g\right\|_{L_r}+ v^{\frac{-1}{s}}\left\|g'\right\|_{L_r}(r-1)^{-1/r}\right)\\
											&	= \sqrt{\frac{2}{\pi}}\frac{2^{1/r}}{(r-1)^{1/r^2}}\left\|g\right\|_{L_r}^{\frac{1}{s}}\left\|g'\right\|_{L_r}^{\frac{1}{r}}.
\end{align*}
Thus $f \in L_1,$ $\hat{f}$ exists and $g=\hat{\check{g}}= \hat{f} \; \text{a.e.}$
\end{proof}
\begin{remark}
Inequality \eqref{eq:car} can be rewritten in multiplier notation as
$$
\|T_{g}\|_{\mathcal{B}(L_1)}\le  C \left\| g \right\|^{\frac{1}{s}}_{L_r} \left\| g'\right\|^{\frac{1}{r}}_{L_r}.
$$
\end{remark}

The reach of Carlson's inequality can be greatly improved by the use of a partition of unity:
\begin{corollary}\label{partUnity}

Let $\phi_j$ be such that $\sum_j \phi_j(x)=1$ for almost all $x$. If $ g\phi_j\in W_r^1(\R)$, $1<r\le 2$, for all $j$ and $\sum_j\|g\phi_j\|_{L_r}^{\frac{1}{s}} \|(g \phi_j)'\|^{\frac{1}{r}}_{L_r}<\infty$, where $\frac{1}{r}+\frac{1}{s}=1$, then there exists $f\in L_1(\R)$ and a constant $C=C(r)>0$ independent of $f$ and $g$, such that $\hat f=g\; \text{a.e.}$ and
$$\|f\|_{L_1}\le C \sum_j \|g\phi_j\|_{L_r}^{\frac{1}{s}}\|(g\phi_j)'\|_{L_r}^{\frac{1}{r}}.$$
\end{corollary}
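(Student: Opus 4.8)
The plan is to reduce everything to the single-function Carlson inequality of Proposition \ref{carlson'sinequality} by handling each piece $g\phi_j$ separately and then reassembling the pieces through an absolutely convergent series in $L_1$.

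First I would apply Proposition \ref{carlson'sinequality} to the function $g\phi_j\in W_r^1(\mathbb R)$ for each index $j$. This yields a function $f_j\in L_1(\mathbb R)$ with $\hat{f_j}=g\phi_j$ a.e.\ and
$$\|f_j\|_{L_1}\le C\,\|g\phi_j\|_{L_r}^{1/s}\,\|(g\phi_j)'\|_{L_r}^{1/r},$$
where, crucially, $C=C(r)$ is the \emph{same} constant for every $j$, since the constant furnished by Proposition \ref{carlson'sinequality} depends only on $r$. Next, set $f:=\sum_j f_j$. By hypothesis $\sum_j\|g\phi_j\|_{L_r}^{1/s}\|(g\phi_j)'\|_{L_r}^{1/r}<\infty$, so $\sum_j\|f_j\|_{L_1}<\infty$, and since $L_1(\mathbb R)$ is complete the series converges in $L_1$ to some $f\in L_1(\mathbb R)$ with
$$\|f\|_{L_1}\le\sum_j\|f_j\|_{L_1}\le C\sum_j\|g\phi_j\|_{L_r}^{1/s}\|(g\phi_j)'\|_{L_r}^{1/r},$$
which is precisely the asserted bound.

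It then remains to identify $\hat f$. Because the Fourier transform is bounded from $L_1(\mathbb R)$ into $L_\infty(\mathbb R)$ (indeed into $C_0(\mathbb R)$), $L_1$-convergence of the partial sums $\sum_{j\le N}f_j\to f$ forces uniform convergence $\sum_{j\le N}\hat{f_j}\to\hat f$. Fix a set $E\subset\mathbb R$ of full measure on which, simultaneously, $\hat{f_j}=g\phi_j$ for every $j$ and $\sum_j\phi_j=1$; this is a countable intersection of full-measure sets. For $k\in E$ the partial sums satisfy $\sum_{j\le N}\hat{f_j}(k)=g(k)\sum_{j\le N}\phi_j(k)$, and letting $N\to\infty$ gives $\hat f(k)=g(k)$. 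Hence $\hat f=g$ a.e., as required.

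The only step that needs genuine care is this last identification: one must justify interchanging the infinite sum with the Fourier transform (handled by the uniform convergence above) and check that the partition-of-unity identity $\sum_j\phi_j=1$ still produces $\sum_j g\phi_j=g$ after restricting to the common full-measure set on which all the a.e.\ identities $\hat{f_j}=g\phi_j$ hold. Everything else is a direct appeal to Proposition \ref{carlson'sinequality} together with completeness of $L_1(\mathbb R)$.
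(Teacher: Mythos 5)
Your proposal is correct and follows essentially the same route as the paper's proof: apply Proposition \ref{carlson'sinequality} to each $g\phi_j$, sum the resulting $f_j$ using completeness of $L_1$ and the summability hypothesis, and identify $\hat f=g$ a.e.\ via continuity of the Fourier transform. Your extra care with the common full-measure set is a minor refinement of the same argument, not a different approach.
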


\begin{proof}
  By design, $g=\sum_j  g\phi_j$. Let $f_j\in L_1(\mathbb{R})$ be such that $\hat f_j=g\phi_j$ by Proposition \ref{carlson'sinequality}. By assumption
\begin{equation*}
\left\|\sum_{j=1}^n f_j\right\|_{L_1} \le \sum_{j=1}^n\left\|f_j\right\|_{L_1}\le C \sum_j \|g\phi_j\|_{L_r}^{\frac{1}{s}} \|(g \phi_j)'\|^{\frac{1}{r}}_{L_r}<\infty.
\end{equation*}
So $\sum_{j=1}^\infty f_j$ converges to some $f\in L_1,$ $\|f\|_{L_1}\le  C(r)\sum_j \|g\phi_j\|_{L_r}^{\frac{1}{s}}\|(g\phi_j)'\|_{L_r}^{\frac{1}{r}}$ and $\hat f=\sum_j \hat f_j=g\; \text{a.e.},$
as $\lim_{n\to \infty} \left\|\hat{f}-\sum_{j=1}^n \hat{f_j}\right\|_{L_1} \le \lim_{n\to \infty} \left\|f- \sum_{j=1}^n f_j\right\|_{L_1}= 0.$
\end{proof}

\subsection{Periodic multipliers}
For periodic multipliers a Carlson type inequality is not directly applicable as these are not Fourier transforms of $L_1$-functions. We mention that in \cite{besov} a suitable smooth cut-off function $\eta$ was used where $\eta=1$ in a neighborhood of $[-\pi,\pi]$ and $\eta$ has compact support to estimate the multiplier norm of a periodic multiplier $\psi$ by the non-periodic one $\eta \psi$. For the multiplier norm of $\eta\psi$ the above Carlson's type inequality can be then used.
However, we prove a result similar to Proposition \ref{carlson'sinequality} for periodic multipliers which makes the introduction of a cut-off function superfluous and hence simplifies the technicalities in later estimates.

The space $W^1_{r,per}[-\pi,\pi]$ below denotes the Sobolev space of $2\pi$-periodic functions $g$ on $\mathbb{R}$ where both $g$ and its generalized derivative $g'$ belong to $L_r[-\pi,\pi]$.

\begin{theorem}
\label{thm:eta}
Let $g\in W^1_{r,per}[-\pi,\pi],$ $1<r\le 2,$ then $g$ is an $L_1$-multiplier and there is $C=C(r)>0$, independent of $g$, such that
$$\|T_{g}\|_{\mathcal{B}(L_1)}\leq \left|a_0  \right| + C \left\| g \right\|^{\frac{1}{s}}_{L_r[-\pi,\pi]} \left\| g'\right\|^{\frac{1}{r}}_{L_r[-\pi,\pi]},$$ where $\frac{1}{r}+\frac{1}{s}=1$ and $a_0=\frac{1}{2 \pi} \int_{-\pi} ^{\pi} g(x) dx$, denotes the $0^{\text{th}}$ Fourier coefficient of $g.$
\end{theorem}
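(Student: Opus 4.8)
The plan is to recognise $T_g$ as convolution by an atomic measure on $\mathbb Z$, so that its $L_1$-multiplier norm equals the $\ell^1$-norm of the Fourier coefficients of $g$, and then to bound that $\ell^1$-norm by a discrete analogue of the Carlson argument in Proposition~\ref{carlson'sinequality}. Concretely, expand $g$ in its Fourier series $g(k)=\sum_{n\in\mathbb Z}a_n e^{ink}$ with $a_n=\frac1{2\pi}\int_{-\pi}^{\pi}g(x)e^{-inx}\,\dd x$. Since $e^{ink}\hat f(k)$ is the Fourier transform of $f(\cdot-n)$, the multiplier $e^{ink}$ corresponds to the Dirac mass $\delta_n$; granting, from the next step, that $\sum_n|a_n|<\infty$, the measure $\mu=\sum_n a_n\delta_n$ is finite with $\hat\mu=g$, hence $g$ is an $L_1$-multiplier with $T_g f=\mu*f=\sum_n a_n f(\cdot-n)$ (the series converging in $L_1$) and
$$\|T_g\|_{\mathcal B(L_1)}=\|\mu\|_{\TV}=\sum_{n\in\mathbb Z}|a_n|\le |a_0|+\sum_{n\ne0}|a_n|.$$
Everything thus reduces to proving $\sum_{n\ne0}|a_n|\le C(r)\,\|g\|_{L_r[-\pi,\pi]}^{1/s}\|g'\|_{L_r[-\pi,\pi]}^{1/r}$.

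For this I would invoke the Hausdorff--Young inequality for Fourier series: for $1<r\le2$ and $\frac1r+\frac1s=1$, $\big(\sum_n|a_n|^s\big)^{1/s}\le C\|g\|_{L_r[-\pi,\pi]}$, and, since $g$ is absolutely continuous and $2\pi$-periodic so that integration by parts gives $in a_n$ as the $n$-th Fourier coefficient of $g'$, also $\big(\sum_n|n|^s|a_n|^s\big)^{1/s}\le C\|g'\|_{L_r[-\pi,\pi]}$. Then, mimicking the proof of Proposition~\ref{carlson'sinequality} with integrals replaced by sums, split $\sum_{n\ne0}|a_n|=\sum_{0<|n|\le v}|a_n|+\sum_{|n|>v}|a_n|$ for a parameter $v>0$: Hölder's inequality bounds the first sum by $(2v)^{1/r}\big(\sum_n|a_n|^s\big)^{1/s}$ and the second by $\big(\sum_{|n|>v}|n|^{-r}\big)^{1/r}\big(\sum_n|n|^s|a_n|^s\big)^{1/s}$, where $\sum_{|n|>v}|n|^{-r}\le\frac{2}{r-1}\max(v,1)^{1-r}$ by comparison with $\int x^{-r}\,\dd x$. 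This produces a bound of the form $C\big(v^{1/r}\|g\|_{L_r[-\pi,\pi]}+v^{-1/s}\|g'\|_{L_r[-\pi,\pi]}\big)$, and optimising over $v$ by the choice $v=\|g'\|_{L_r[-\pi,\pi]}/\|g\|_{L_r[-\pi,\pi]}$ yields the claimed inequality, hence the theorem.

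There is no genuinely hard obstacle here: the conceptual content --- that a periodic multiplier is convolution by an atomic measure on $\mathbb Z$ whose total variation is the $\ell^1$-norm of its Fourier coefficients, controlled in Carlson fashion by interpolating between the $\ell^s$ bounds for $(a_n)$ and $(na_n)$ --- is exactly parallel to the non-periodic case and, as promised, needs no cut-off function. The only points demanding care are the normalising constants in the Hausdorff--Young inequality for Fourier series and the boundary cases in the optimisation over $v$: when $g$ is constant both sides of the asserted estimate vanish; when the optimal $v$ is $<1$ the first sum is empty, one bounds $\sum_{|n|>v}|n|^{-r}$ by $2\zeta(r)$, and then $\|g'\|_{L_r[-\pi,\pi]}\le\|g'\|_{L_r[-\pi,\pi]}^{1/r}\|g\|_{L_r[-\pi,\pi]}^{1/s}$ --- valid because $v<1$ forces $\|g'\|_{L_r[-\pi,\pi]}\le C(r)\|g\|_{L_r[-\pi,\pi]}$ --- restores the stated form.
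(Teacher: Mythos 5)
Your proof is correct and follows essentially the same route as the paper: reduce $\|T_g\|_{\mathcal{B}(L_1)}$ to $\sum_n|a_n|$ via the atomic measure $\mu=\sum_n a_n\delta_n$, then control $\sum_{n\neq 0}|a_n|$ by combining the Hausdorff--Young inequality for Fourier series with a Carlson-type interpolation between $(a_n)$ and $(na_n)$ in $\ell^s$. The only difference is that where you prove the needed discrete Carlson inequality by hand (splitting at a threshold $v$, H\"older, and optimising, mirroring Proposition~\ref{carlson'sinequality}), the paper simply cites Bellman's inequality with $\alpha=\beta=s$; your edge-case handling for $v<1$ and constant $g$ is sound.
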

\begin{proof}
Since $g\in L_r[-\pi,\pi]$ and is $2\pi$-periodic it can be written as a Fourier series
$g(x)=\sum_{k=-\infty}^{\infty}a_ke^{ikx}$, where $a_k=\frac{1}{2 \pi} \int_{-\pi} ^{\pi}e^{-ikx} g(x) dx$, denotes the $k^{\text{th}}$ Fourier coefficient of $g$. If $\mu:=\sum_{k=-\infty}^{\infty}a_k\delta_k$, where $\delta_k$ is the Dirac measure at $k$, then $g=\hat{\mu}$ and hence $g$ is an $L_1$-multiplier if and only if $\|T_{g}\|_{\mathcal{B}(L_1)}=\|\mu\|_{\TV}=\sum_{k=-\infty}^{\infty}|a_k|<\infty$. First, note that $\left|a_0  \right|\le \frac{1}{2 \pi} \int_{-\pi} ^{\pi} \left|g(x) \right|dx < \infty$ and that $ik a_k$ are the Fourier coefficients of $g'$. Using Bellman's inequality \cite{Bellman} with $\alpha= \beta= s$, and Hausdorff-Young inequality, (see \cite{Hausdorff,Young}), we have
$$\sum_{k=1} ^\infty \left|a_k\right| \le C \left( \sum_{k=1}^\infty \left|a_k\right|^s \right)^{\frac{1}{s^2}} \left( \sum_{k=1}^\infty \left|(ik a_k)\right|^s \right)^{\frac{1}{s}\left(\frac{s-1}{s}\right)}\le C \left\|g\right\|_{L_r[-\pi,\pi]}^{\frac{1}{s}} \left\|g'\right\|_{L_r[-\pi,\pi]}^{\frac{1}{r}}.$$ Clearly, the same inequality holds for $\sum_{k=-\infty} ^{-1} \left|a_k\right|$. Thus $$\sum_{k=-\infty}^{\infty}|a_k|\le \left|a_0  \right|+ C \left\|g\right\|_{L_r[-\pi,\pi]}^{\frac{1}{s}} \left\|g'\right\|_{L_r[-\pi,\pi]}^{\frac{1}{r}} < \infty$$ and the proof is complete.
\end{proof}

\begin{remark}
The term $|a_0|$ cannot be removed from the above estimate in general as the specific example $g \equiv 1$ shows.
\end{remark}

\section {Consistency: Higher order Gr\"unwald-type formulae}

Let $\alpha \in \mathbb{R}_+$ and let $$X_\alpha(\mathbb{R}):=\{f \in L_1=L_1(\mathbb{R}): \exists \;g \in L_1\; \text{with} \;\hat{g} (k)= (-ik)^\alpha \hat{f} (k),~k\in \mathbb{R}\}.$$ For $f \in X_\alpha(\mathbb{R})$ define $f^{(\alpha)}=g$ if $g \in L_1$ and $(-ik)^\alpha \hat{f} (k)= \hat{g} (k)$ for $k\in \mathbb{R}$. The function $f^{(\alpha)}$, defined uniquely by the uniqueness of the Fourier transform, is called the \textit{Riemann-Liouville fractional derivative} of $f$. Set $\left\|f\right\|_\alpha:=\left\|f^{(\alpha)}\right\|_{L_1(\mathbb{R})}$ for $f \in X_\alpha(\mathbb{R}).$ Similarly, we define $$X_\alpha(\mathbb{R}_+):=\{f \in L_1(\mathbb{R}_+): \exists g \in L_1(\mathbb{R}_+)\; \text{with} \;\hat{g} (z)= (-z)^\alpha \hat{f} (z),~\Re z\le 0\},$$ where $\hat{f}(z)=\int_0^{\infty}e^{zt}f(t)\,\dd t$ denotes the Laplace transform of $f$. For $f \in X_\alpha(\mathbb{R}_+)$ define $f^{(\alpha)}=g$ if $g \in L_1(\mathbb{R}_+)$ and $(-z)^\alpha \hat{f} (z)= \hat{g} (z)$ for $\Re z\le 0$.  Set $\left\|f\right\|_\alpha:=\left\|f^{(\alpha)}\right\|_{L_1(\mathbb{R}_+)}$ for $f \in X_\alpha(\mathbb{R}_+).$ \\
In order to calculate the fractional derivative of $f$, a shifted Gr\"{u}nwald formula was introduced in \cite{Meerschaert2004d} given by
\begin{equation}
\label{grunwald}
A^\alpha _{h,p} f(x)=\frac{1}{\Gamma (-\alpha)} \frac {1}{h^\alpha} \sum_{m=0}^\infty \frac{\Gamma(m-\alpha )}{\Gamma (m+1)} f(x-(m-p)h),
\end{equation}
where $p$ is an integer.
\begin{remark}\label{rem:l1r+}
When \eqref{grunwald} is applied to a function $f\in L_1(\R_+)$, we extend $f$ to $L_1(\R)$ by setting $f(x)=0$ for $x<0$. Hence, if $p\le 0$, then with this convention, $A_{h,p}^\alpha f$ is supported on $\R_+$ and hence $A_{h,p}^\alpha$ can be regarded as an operator on $L_1(\R_+)$.
\end{remark}
In \cite[Proposition 4.9]{Baeumer2009a} it is shown that for all $f\in X_{\alpha}(\mathbb{R})$ we have $A^\alpha _{h,p} f\to f^{(\alpha)}$ in $L_1(\mathbb{R})$ as $h\to 0+$.
For $p=0$, $\alpha>0$ and $f\in X_{\alpha}(\mathbb{R}_+)$, see \cite[Theorem 13]{Westphal1974} for the same result.
Theorem \ref{maintheorem} shows that for $f\in X_{\alpha+\beta}(\mathbb{R})$ the convergence rate in $L_1(\mathbb{R})$ is of order $h^\beta$, $0<\beta\le 1$ as $h\to 0+$, and if $p\le 0$, then the same holds for $f\in X_{\alpha+\beta}(\mathbb{R}_+)$ in $L_1(\mathbb{R}_+)$.

Furthermore, a detailed error analysis allows for higher order approximations by combining Gr\"unwald formulas with different shifts $p$ and accuracy $h$, cancelling out higher order terms. This will be shown in Corollary \ref{HigherOrderConsistency}.

In the error analysis of Theorem \ref{maintheorem} below, the function
\begin{equation}
\label{omega}
\omega_{p,\alpha} (z) = \left ( \frac {1-e^{-z}}{z} \right) ^ \alpha e^{zp},
\end{equation}
where $\alpha \in \mathbb{R}_+ $ and $p\in \R$, plays a crucial role. As we take the negative real axis as the branch cut for the fractional power, $\omega_{p,\alpha}$ is analytic, except where $(1-e^{-z})/z$ is on the negative real axis and at $z=0$. As $\lim_{z\to 0}\omega_{p,\alpha} (z)=1$, the singularity at zero is removable and hence there exists $R>0, a_j\in\R$ such that
\begin{equation}
\label{omegataylor}
\omega_{p,\alpha} (z) = \sum_{j=0}^\infty a_j z^j \; \text {for all}\; |z|<2R.
\end{equation}
 In particular, $a_0=1$ and $a_1=(p-\alpha/2)$. Furthermore, this implies that there exists $C>0$ such that
\begin{equation}
\label{boundforomegaminusone}
|\omega_{p,\alpha} (z) -1| \leq \left\{ \begin{array}{cl}
C |z| & \textnormal {for} \; |z| < R, \\
C & \textnormal {for} \: z\in i\R,\\
C  & \textnormal {for} \: \mathrm{Re} z\ge 0 ~\& ~p\le 0.\\
\end{array}\right.
\end{equation}
It was shown in \cite[Lemma 2]{Westphal1974} that
\begin{equation}\label{q_alpha}
\left(\frac{1-e^{-z}}{z}\right)^{\alpha}=\int_0^{\infty}e^{-zt}q_{\alpha}(t)\,\dd t,~\Re z\ge 0,~q_\alpha\in L_1(\mathbb{R}_+),
\end{equation}
and $\int_0^\infty q_\alpha(t)\,\dd t=1$.\\
Moreover, for $\alpha >1 $ and $k \in \mathbb{R},$ or for $0< \alpha <1 $ and $-\pi  < k < \pi,$ it is easily verified that
\begin{align}
\label{boundforderivativeomega}
\left|\frac{d}{dk}(\omega_{p,\alpha} (-ik))\right| \leq C.
\end{align}

\begin{lemma}
\label{ghatandghatprimeinl2}
Let $$\hat g_{\beta,N,p}(k)=\frac{\omega_{p,\alpha}(-ik)-\sum_{j=0}^N a_j (-ik)^j}{(-ik)^{N+\beta}},$$ where $\omega_{p,\alpha}(z)$ is given by (\ref{omega}), $a_j$ by \eqref{omegataylor} and $0<\beta\le1$, $N\in\mathbb N$. Then $\hat g_{\beta,N,p}$ is the Fourier transform of some $g_{\beta,N,p} \in L_1$. Furthermore, $
\supp(g_{\beta,N,p})\subset [-p,\infty)\cup\R_+$.
\end{lemma}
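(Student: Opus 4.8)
\emph{Strategy.} The plan is to obtain the $L_1$-representability of $\hat g_{\beta,N,p}$ from the partition-of-unity form of Carlson's inequality, Corollary~\ref{partUnity}, using a partition of $\R$ adapted to the three features of this multiplier: the removable singularity at $k=0$, the algebraic decay as $|k|\to\infty$, and -- when $0<\alpha<1$ -- the algebraic branch-type singularities that $\omega_{p,\alpha}(-i\,\cdot\,)$ has at the points $2\pi m$, $m\in\mathbb Z\setminus\{0\}$ (recall from the discussion after \eqref{omega} that $\omega_{p,\alpha}$ is analytic off the set on which $(1-e^{-z})/z$ meets the branch cut, which on $i\R$ is exactly $\{0\}\cup\{-2\pi im:m\neq 0\}$). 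Once $g_{\beta,N,p}\in L_1$ is known, its support is read off from the Laplace representation \eqref{q_alpha}.

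\emph{The two routine regions.} Near $0$: by \eqref{omegataylor}, for $|k|<2R$ the numerator of $\hat g_{\beta,N,p}$ equals $\sum_{j\ge N+1}a_j(-ik)^j=(-ik)^{N+1}\rho(-ik)$ with $\rho$ analytic on $\{|z|<2R\}$, so $\hat g_{\beta,N,p}(k)=(-ik)^{1-\beta}\rho(-ik)$ there, which is bounded (as $1-\beta\ge 0$) with derivative of order $|k|^{-\beta}$; thus it lies in $W^1_r$ on a fixed neighbourhood of $0$ for suitable $r$ (and it is analytic there when $\beta=1$; a dyadic refinement near $0$ removes any constraint on $r$ from this region). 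On $\{|k|\ge R\}$, using $|\omega_{p,\alpha}(-ik)|=(2|\sin(k/2)|/|k|)^\alpha\le(2/|k|)^\alpha$ together with the analogous decay of $\tfrac{d}{dk}\omega_{p,\alpha}(-ik)$ away from $2\pi\mathbb Z$ (cf.\ \eqref{boundforderivativeomega}), the explicit quotient gives $|\hat g_{\beta,N,p}(k)|\le C|k|^{-\beta}$ and $|\hat g_{\beta,N,p}'(k)|\le C|k|^{-1-\beta}$ (when $N\ge1$, and for any $N$ if $\alpha\ge1$). Covering $\{|k|\ge R\}$ by the dyadic blocks $I_j=\{2^jR\le|k|<2^{j+1}R\}$ with cut-offs $\phi_j$, $|\phi_j'|\le C2^{-j}$, the $j$-th Carlson term is
$$
\|g\phi_j\|_{L_r}^{1/s}\,\|(g\phi_j)'\|_{L_r}^{1/r}\le C\,(2^j)^{(1/r-\beta)/s}\,(2^j)^{(1/r-1-\beta)/r}=C\,2^{-j\beta},
$$
which is summable; the neighbourhood of $0$ contributes one further finite term.

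\emph{The branch points -- the crux.} For $0<\alpha<1$ two further effects enter on the blocks $I_j$: when $N=0$ the derivative estimate only gives $|\hat g_{\beta,N,p}'(k)|\le C|k|^{-\alpha-\beta}$ (the slowly oscillating factor $\omega_{p,\alpha}$ no longer yields a gain of a full power), and near each of the $O(2^j)$ points $2\pi m\in I_j$ there is a cusp: from \eqref{omega}, near $2\pi m$ the factor $((1-e^{ik})/(-ik))^\alpha$ equals $(k-2\pi m)^\alpha$ times a function analytic in $k-2\pi m$ whose value there has modulus $(2\pi|m|)^{-\alpha}$, so $\hat g_{\beta,N,p}$ carries a term of size $(2\pi|m|)^{-\alpha-N-\beta}|k-2\pi m|^\alpha$, with derivative of order $(2\pi|m|)^{-\alpha-N-\beta}|k-2\pi m|^{\alpha-1}$, which lies in $L_r$ near $2\pi m$ exactly when $r(1-\alpha)<1$. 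The device is to keep all cusps of $I_j$ inside the single piece $\phi_j$ (rather than isolating them), so that $\|\hat g_{\beta,N,p}\|_{L_r(I_j)}$ is still governed by the larger regular part of size $(2^j)^{-\beta}$; then both the regular and the cusp contributions to the $j$-th Carlson term are $C(2^j)^{(1-\alpha)/r-\beta}$, so $\sum_j$ converges once $r\in(1,2]$ is chosen with $(1-\alpha)/\beta<r<1/(1-\alpha)$ -- for $N\ge1$ the lower constraint is absent, and for $\alpha\ge1$ there is no branch obstruction at all and $r=2$ works. With such an $r$ fixed, Corollary~\ref{partUnity} applied to the combined partition yields $g_{\beta,N,p}\in L_1$ with Fourier transform $\hat g_{\beta,N,p}$ almost everywhere. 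The main obstacle is precisely making the window $(1-\alpha)/\beta<r<1/(1-\alpha)$ within $(1,2]$ nonempty; the dyadic grouping of the infinitely many branch points into the blocks $I_j$ is what keeps the exponent at $(1-\alpha)/r-\beta$ rather than something worse.

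\emph{The support.} Now that $g_{\beta,N,p}\in L_1$, write, using \eqref{q_alpha}, $\omega_{p,\alpha}(-ik)=\int_0^\infty e^{ik(t-p)}q_\alpha(t)\,\dd t=\widehat{\tilde q}(k)$, where $\tilde q(s):=q_\alpha(s+p)$ ($q_\alpha$ extended by $0$) lies in $L_1$ with $\supp\tilde q\subset[-p,\infty)$; also $\sum_{j=0}^N a_j(-ik)^j=\widehat{\nu}(k)$ with $\nu=\sum_{j=0}^N a_j\delta_0^{(j)}$ supported at $\{0\}$. Hence $(-ik)^{N+\beta}\hat g_{\beta,N,p}(k)=\widehat{\tilde q-\nu}(k)$, a tempered distribution supported in $[-p,\infty)\cup\{0\}\subset[-p,\infty)\cup\R_+$. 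Since division by $(-ik)^{N+\beta}$ corresponds, via the pair $\int_0^\infty e^{-wx}x^{\gamma-1}\,\dd x=\Gamma(\gamma)\,w^{-\gamma}$, to convolution with the causal Riemann--Liouville kernel $x_+^{N+\beta-1}/\Gamma(N+\beta)$ supported on $[0,\infty)$, and convolution with a right-supported kernel cannot enlarge the support to the left, $\supp g_{\beta,N,p}\subset[-p,\infty)\cup\R_+$. To make this rigorous without handling the non-integrable kernel, one may instead pair $g_{\beta,N,p}$ with $\varphi\in C_c^\infty\big((-\infty,-\max\{p,0\})\big)$ and use Parseval together with the fact that, by the Laplace representation, $k\mapsto\hat g_{\beta,N,p}(k)$ is the boundary value of a function $G$ analytic in $\{\Re z>0\}$ ($z=-ik$) with $|G(z)|\le C\big(1+e^{\max\{p,0\}\,\Re z}\big)$ there -- a Paley--Wiener bound forcing $\supp g_{\beta,N,p}\subset[-\max\{p,0\},\infty)=[-p,\infty)\cup\R_+$.
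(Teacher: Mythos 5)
Your overall route is genuinely different from the paper's: you try to run the partition-of-unity Carlson argument (Corollary \ref{partUnity}) uniformly in all cases, compensating for the branch-point singularities of $\omega_{p,\alpha}(-i\cdot)$ at $2\pi\mathbb Z\setminus\{0\}$ when $0<\alpha<1$ by lowering $r$ below $1/(1-\alpha)$, and you obtain the support statement by a Paley--Wiener argument rather than the paper's explicit inversion/termwise tempered-distribution observation. Your identification of the singular set and the local behaviour $|k-2\pi m|^{\alpha}$ is correct, and for the cases $N\ge 1$, or $N=0$ with $\beta=1$, or $\alpha\ge 1$, your exponent bookkeeping does close (the only constraint is $1<r<1/(1-\alpha)$, which is always satisfiable).

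There is, however, a genuine gap in the remaining case $N=0$, $0<\beta<1$, $0<\alpha<1$, which is squarely inside the hypotheses of the lemma ($0<\beta\le 1$) and is exactly the case needed in Theorem \ref{maintheorem}. Your own computation shows the $j$-th Carlson term is of order $2^{j((1-\alpha)/r-\beta)}$, so you need simultaneously $r>(1-\alpha)/\beta$ (summability) and $r<1/(1-\alpha)$ (local $L_r$-integrability of the cusp derivative), with $r\le 2$. This window can be empty: for $\alpha=\beta=0.3$ you would need $r>7/3$ while also $r\le 2$ and $r<10/7$. You flag making the window nonempty as ``the main obstacle'' but never verify it, and no choice of $r$ rescues the argument there, because the bound $|\hat g_{\beta,0,p}'(k)|\sim |k|^{-\alpha-\beta}$ is essentially sharp (the derivative of the oscillating factor in $\omega_{p,\alpha}(-ik)$ really is of order $|k|^{-\alpha}$). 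The paper sidesteps precisely this case by treating $N=0$, $\beta<1$ non-variationally: for $p=0$ it quotes Westphal's Lemma 5, and for general $p$ it writes $\hat g_{\beta,0,p}(k)=e^{-ikp}\hat g_{\beta,0,0}(k)+(e^{-ikp}-1)/(-ik)^{\beta}$ and inverts both terms explicitly (a translate of $g_{\beta,0,0}$ plus a difference of Riemann--Liouville kernels), which also yields the support claim directly; it then reaches the remaining cases for $\alpha<1$ by an induction on $N$ using convolution with the $L_1(\R_+)$-function whose transform is $\bigl(\tfrac{1-e^{ik}}{-ik}-1\bigr)/(-ik)$, reserving the Carlson/partition argument for $\alpha\ge 1$ only. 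To repair your proof you would need to import such an explicit treatment (or an equivalent device) for the base case $N=0$, $\beta<1$ when $\alpha<1$.
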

\begin{proof} Note that for $N,p=0$, $\beta<1$, this was shown in \cite[Lemma 5]{Westphal1974}. Let $N=0$, $\beta<1$ and $p\neq 0$. Then
$$\hat g_{\beta,0,p}(k)=\frac{\omega_{p,\alpha}(-ik)-1}{(-ik)^{\beta}}= e^{-ikp}g_{\beta,0,0}(k)+\frac{e^{-ikp}-1}{(-ik)^{\beta}}.$$
The inverse of the first term is given by $t\mapsto g_{\beta,0,0}(t+p)$; the second term inverts to $t\mapsto ((t+p)^{\beta-1}H(t+p)-t^{\beta-1}H(t))/\Gamma(\beta)$,  where $H$ is the unit step function. Clearly, both functions are in $L_1$ and their support is contained in $[-p,\infty)\cup\R_+$.
Assume $N=0$ and $\beta=1$, or $N\ge 1$.

Case 1: $\alpha\ge 1$ in equation \eqref{omega}. Clearly, each term of $\hat g$ is the Fourier transform of a tempered distribution satisfying the support condition and hence all that remains to show is that $\hat g_{\beta,N,p}$ is the Fourier transform of an $L_1$ function. Note that $\hat g_{\beta,N,p}\not \in L_2$ for $\beta<1/2$. Hence we use Corollary \ref{partUnity}, choosing a particular partition of unity $\left(\phi_j\right)_{j\in\mathbb Z}$.
If in \eqref{omegataylor} $R>1$, set $R=1$ and define $\phi_0(x)=1$ for $|x|<R/2$, $2(R-|x|)/R$ for $R/2\le|x|\le R$, $0$ else. Then $\supp(\phi_0)\subset[-R,R]$. Define $\phi_1(x)=2(x-R/2)/R$ for $R/2\le|x|\le R$, $\phi_1(x)=1$ for $R\le x\le 1$ and $\phi_1(x)=2-x$ for $1<x<2$ and   $\supp(\phi_1)\subset[R/2,2]$.
For $j>1$, let
\begin{equation*}
\phi_j(x)=\left\{ \begin{array}{cl}
(x-2^{j-2})/2^{j-2} & \textnormal {for} \; 2^{j-2}<x<2^{j-1}, \\
(2^j-x)/2^{j-1} & \textnormal {for} \: 2^{j-1}<x<2^{j},\\
0  & \mathrm{else}.\\
\end{array}\right.
\end{equation*}
Then $\supp(\phi_j)\subset [2^{j-2},2^{j}]$. For $j<0$, define $\phi_{j}(x)=\phi_{-j}(-x)$. Hence there exists $C$ such that $\|\phi'_j(x)\|_\infty\le C2^{-|j|+2}$ for all $j\in\mathbb Z$. By Corollary \ref{partUnity}, we are done if we can show that $\sum_j \|\hat g\phi_j\|_{L_2}^{\frac12}\|(\hat g\phi_j)'\|_{L_2}^{\frac12}<\infty.$

As $\hat g$ is analytic in $(-2R,2R)$, $\hat g\phi_0\in W_{2}^1(\R)$. For $j\neq 0$,
\begin{equation*}\begin{split}\hat g_{\beta,N,p}(k)\phi_j(k)=&\frac{\omega_{p,\alpha}(-ik)-\sum_{m=0}^{N-1} a_m (-ik)^m}{(-ik)^{N+\beta}}\phi_j(k)- \frac{ a_N}{(-ik)^{\beta}}\phi_j(k)\\=&T_1(k)-T_2(k).\end{split}\end{equation*}

By \eqref{boundforomegaminusone} and \eqref{boundforderivativeomega}, $\omega_{p,\alpha}$ and $\omega'_{p,\alpha}$ are bounded. Recall that either $N=0$ and $\beta=1$, or $N\ge 1$. In either case, the exponent in the denominator of $T_1$ is at least one. Hence there exists $C$ independent of $j$ such that
$|T_1(k)|\le C 2^{2-|j|}$, and $|T_1'(k)|\le C 2^{2-|j|}$.
The length of the support of $T_1$ is less than $2^{|j|}$. Hence
$$\|T_1\|_{L_2}^{\frac12}\le C^{1/2}(2^{4-2|j|}2^{|j|})^{1/4}\le C 2^{-|j|/4}.$$
Same for $\|T_1'\|_{L_2}^{\frac12}$. To estimate $T'_2$ recall that $\|\phi_j'\|_\infty\le 2^{-|j|+2}$. Thus there exists $C$ independent of $j$ such that
$|T'_2(k)|\le C2^{(|j|-2)(-\beta-1)}+ C2^{(|j|-2)(-\beta)}2^{-|j|+2}=C2^{3+\beta}2^{-(\beta+1)|j|},$ and hence
$$\|T'_2\|_{L_2}^{\frac12}\le C(2^{-2(\beta+1)|j|}2^{|j|})^{1/4}=C2^{-|j|(2\beta+1)/4}.$$
The norm of $T_2$ is bounded by
$$\|T_2\|_{L_2}^{\frac12}\le C (2^{-2|j|\beta}2^{|j|})^{1/4}\le C 2^{|j|(1-2\beta)/4}. $$
This implies that
$\|\hat g_{\beta,N,p}\phi_j\|_{L_2}\|(\hat g_{\beta,N,p}\phi_j)'\|_{L_2}\le C 2^{|j|(1-2\beta)/4}2^{-|j|/4}=C2^{-|j|\beta/2},$ and hence $\sum_j \|\hat g_{\beta,N,p}\phi_j\|_{L_2}\|(\hat g_{\beta,N,p}\phi_j)'\|_{L_2}< \infty$ and therefore $g_{\beta,N,p}\in L_1(\R)$.

Case 2: $\alpha<1$ in equation \eqref{omega}. For $\alpha<1$, $\omega_{p,\alpha}'$ is not bounded; if $\alpha\le 1/2$ it is not even locally in $L_2$, making the above method unfeasible. Instead we use induction on $N$.  We already established that the assertion holds for $N=0$, $\beta<1$, so assume it holds for some $N$; i.e. assume $\hat g_{\beta,N,p}$ is the Fourier transform of an $L_1$ function satisfying the support condition. Then using the fact that the convolution of $L_1$ functions is an $L_1$ function and the fact that we established the assertion for $\alpha =1,$ we obtain that $k\mapsto \hat g_{\beta,N,p}(k)\frac{\frac{1-e^{ik}}{-ik}-1}{-ik}$ is also the Fourier of an $L_1$ function satisfying the support condition as the support of the inverse of the second factor is in $\R_+$. Furthermore,
\begin{equation*}\begin{split}
\hat g_{\beta,N,p}(k)\frac{\frac{1-e^{ik}}{-ik}-1}{-ik}=&\frac{\left ( \frac {1-e^{ik}}{-ik} \right) ^ {\alpha+1} e^{-ikp}- \left(\frac{1-e^{ik}}{-ik}\right)\sum_{j=0}^N a_j(-ik)^j}{(-ik)^{N+1+\beta}}\\
&- \frac{\left ( \frac {1-e^{ik}}{-ik} \right) ^ {\alpha} e^{-ikp}- \sum_{j=0}^N a_j(-ik)^j}{(-ik)^{N+1+\beta}}\\
=&\frac{\left ( \frac {1-e^{ik}}{-ik} \right) ^ {\alpha+1} e^{-ikp}- \sum_{j=0}^{N+1} b_j(-ik)^j}{(-ik)^{N+1+\beta}}\\
&-\sum_{j=0}^N a_j\frac{ \left(\frac {1-e^{ik}}{-ik}\right)-\sum_{m=0}^{N+1-j}c_m (-ik)^m}{(-ik)^{N+1-j+\beta}}\\
&-\frac{\left ( \frac {1-e^{ik}}{-ik} \right) ^ {\alpha} e^{-ikp}- \sum_{j=0}^{N+1} d_j(-ik)^j}{(-ik)^{N+1+\beta}}\\
=&I_1-I_2-I_3,
\end{split}\end{equation*}
where the $b_j$ are Taylor coefficients of $\omega_{p,\alpha+1} $, the $c_m$ are the Taylor coefficients of $\frac{1-e^{-z}}{z}$ and $d_j$ are such that equality holds. As by case 1, $I_1$ and $I_2$ are Fourier transforms of $L_1$ functions, so is $I_3$. Hence $I_3$ at $k=0$ has to be bounded and therefore $d_j=a_j$ and hence $\hat g_{\beta,N+1,p}=I_3$.

The same argument about the convolution of $L_1$-functions applies to the remaining case of $N=0,\beta=1$, using \eqref{q_alpha} and the fact that
$$\omega_{p,\alpha}(-ik)\frac{\frac{1-e^{ik}}{-ik}-1}{-ik}= \frac{\omega_{p,\alpha+1}(-ik)-1}{-ik}-\frac{\omega_{p,\alpha}(-ik)-1}{-ik}.$$
\end{proof}

\begin{theorem}
\label{maintheorem}
Let $0\le\beta\le 1$. Then there exists $C>0$ such that
$f\in X_{\alpha+\beta}(\mathbb{R})$ implies that $$\left\| A^\alpha _{h,p} f - f^{(\alpha)} \right\|_{L_1(\mathbb{R})} \leq C h^\beta \left\|f\right\|_{\alpha +\beta}$$ as $h \to 0+.$ If $p\le 0$ and $f\in X_{\alpha+\beta}(\mathbb{R}_+)$,  then $\left\| A^\alpha _{h,p} f - f^{(\alpha)} \right\|_{L_1(\mathbb{R}_+)} \leq C h^\beta \left\|f\right\|_{\alpha +\beta}$. In case of $\beta=0$, $f\in X_\alpha$, $\left\| A^\alpha _{h,p} f - f^{(\alpha)} \right\|_{L_1(\mathbb{R})}\to 0$.
\end{theorem}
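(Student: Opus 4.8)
The plan is to pass to Fourier multipliers and then isolate the factor $h^\beta$ by multiplying and dividing by $(-ikh)^\beta$, so that what remains is exactly the object controlled by Lemma~\ref{ghatandghatprimeinl2}.

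First I would compute the symbol of $A^\alpha_{h,p}$. Since $f(\cdot-(m-p)h)$ has Fourier transform $k\mapsto e^{ik(m-p)h}\hat f(k)$ and $\sum_{m=0}^\infty \tfrac{\Gamma(m-\alpha)}{\Gamma(-\alpha)\Gamma(m+1)}w^m=(1-w)^\alpha$ (absolutely convergent for $|w|\le1$ since $\alpha>0$), evaluating at $w=e^{ikh}$ and setting $z=-ikh$ gives, with $\omega_{p,\alpha}$ as in \eqref{omega},
\begin{equation*}
\widehat{A^\alpha_{h,p}f}(k)=\frac{(1-e^{ikh})^\alpha e^{-ikph}}{h^\alpha}\,\hat f(k)=(-ik)^\alpha\,\omega_{p,\alpha}(-ikh)\,\hat f(k),
\end{equation*}
using $z^\alpha/h^\alpha=(-ik)^\alpha$ and $e^{zp}=e^{-ikph}$. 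As $\widehat{f^{(\alpha)}}(k)=(-ik)^\alpha\hat f(k)$, subtracting yields $\widehat{A^\alpha_{h,p}f-f^{(\alpha)}}(k)=(-ik)^\alpha\bigl(\omega_{p,\alpha}(-ikh)-1\bigr)\hat f(k)$.

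Next, for $0<\beta\le1$, multiplying and dividing by $(-ikh)^\beta=h^\beta(-ik)^\beta$ and recalling $a_0=1$ from \eqref{omegataylor}, I would rewrite this as
\begin{equation*}
\widehat{A^\alpha_{h,p}f-f^{(\alpha)}}(k)=h^\beta\,\frac{\omega_{p,\alpha}(-ikh)-1}{(-ikh)^\beta}\,(-ik)^{\alpha+\beta}\hat f(k)=h^\beta\,\hat g_{\beta,0,p}(kh)\,\widehat{f^{(\alpha+\beta)}}(k),
\end{equation*}
where $\hat g_{\beta,0,p}$ is the function from Lemma~\ref{ghatandghatprimeinl2} with $N=0$, and $f^{(\alpha+\beta)}\in L_1$ since $f\in X_{\alpha+\beta}(\R)$. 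By Lemma~\ref{ghatandghatprimeinl2}, $\hat g_{\beta,0,p}=\widehat{g_{\beta,0,p}}$ for some $g_{\beta,0,p}\in L_1$; by the scaling invariance \eqref{multipliernormequivalence}, the multiplier operator with symbol $k\mapsto\hat g_{\beta,0,p}(kh)$ has $L_1$-operator norm $\|g_{\beta,0,p}\|_{L_1}$, independent of $h$. Therefore
\begin{equation*}
\bigl\|A^\alpha_{h,p}f-f^{(\alpha)}\bigr\|_{L_1(\R)}\le h^\beta\,\|g_{\beta,0,p}\|_{L_1}\,\|f^{(\alpha+\beta)}\|_{L_1}=Ch^\beta\|f\|_{\alpha+\beta},
\end{equation*}
with $C=C(\alpha,\beta,p)$. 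For the half-line statement, the support clause of Lemma~\ref{ghatandghatprimeinl2} gives $\supp g_{\beta,0,p}\subset[-p,\infty)\cup\R_+\subset\R_+$ when $p\le0$; since $f^{(\alpha+\beta)}$ is supported on $\R_+$ for $f\in X_{\alpha+\beta}(\R_+)$ (extended by zero, the Fourier transform being the Laplace transform on the imaginary axis, so the two definitions of $f^{(\alpha)}$ coincide), the convolution $A^\alpha_{h,p}f-f^{(\alpha)}$ is supported on $\R_+$ and the $L_1(\R)$ bound becomes an $L_1(\R_+)$ bound. The remaining case $\beta=0$ carries no rate and is exactly \cite[Proposition 4.9]{Baeumer2009a} (and \cite[Theorem 13]{Westphal1974} for $p=0$ on $\R_+$), so there is nothing further to prove.

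Given Lemma~\ref{ghatandghatprimeinl2}, this argument is essentially bookkeeping; the only delicate points are (i) justifying the multiplier identity, i.e.\ term-by-term Fourier transformation of \eqref{grunwald} and the binomial summation on $|w|=1$, which is legitimate because the Gr\"unwald coefficients are absolutely summable for $\alpha>0$, and (ii) the support and definition reconciliation in the $\R_+$ case. The genuinely hard part — that $\hat g_{\beta,0,p}$ inverts to an $L_1$ function for every $0<\beta\le1$, including the borderline $\beta=1$ and small $\beta$ where $\hat g_{\beta,0,p}\notin L_2$ — has already been dispatched in Lemma~\ref{ghatandghatprimeinl2} via the Carlson-type estimate together with the partition of unity, using \eqref{q_alpha} in Case~2.
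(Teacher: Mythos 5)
Your proposal is correct and follows essentially the same route as the paper's proof: pass to the Fourier symbol via the binomial series, factor the error as $h^\beta\,\hat g_{\beta,0,p}(kh)\,\widehat{f^{(\alpha+\beta)}}(k)$, and invoke Lemma~\ref{ghatandghatprimeinl2} together with the scaling invariance \eqref{multipliernormequivalence} of the multiplier norm, with the half-line case handled by the support clause of that lemma. No substantive differences.
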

\begin{proof}
The case $\beta=0$ was already shown in \cite[Proposition 4.9]{Baeumer2009a} and is only included in the statement of the theorem for completeness.

To prove the first statement, first note that $(1+z)^\alpha =\sum_{m=0}^\infty {\binom{\alpha}{m} } z^m$ for $z \in \mathbb{C}, \; \left|z\right| < 1$ and $\alpha \in \mathbb{R}_{+}.$ Moreover, the binomial coefficients are related to the Gamma function by the following equation $ {\binom{\alpha}{m}}= \frac{(-1)^m \Gamma (m- \alpha)}{\Gamma (-\alpha) \Gamma(m+1)}\;.$
Taking Fourier transforms in (\ref{grunwald}) we get
\begin{align}
\label{fouriertransformomega}
\widehat{(A_h^\alpha f)} (k)  = &  h^{-\alpha} \sum_{m=0}^\infty (-1)^m {\binom{\alpha}{m}} e^{ik(m-p)h} \hat{f} (k)
						    		 =  h^{-\alpha} e^{-ikhp} (1-e^{ikh})^\alpha  \hat{f} (k) \nonumber\\
										 = &  (-ik)^\alpha \omega_{p,\alpha} (-ikh) \hat {f} (k)
										 = \widehat{f^{(\alpha)}} (k)+ \hat {\zeta}_h (k),\nonumber
 \end{align}
where $\omega_{p,\alpha}(z)$ is given by (\ref{omega}) and $\hat {\zeta}_h (k) = (-ik)^\alpha (\omega_{p,\alpha} (-ikh) -1) \hat {f} (k).$
We rewrite the error term $\hat{\zeta}_h (k)$ as a product of Fourier transforms of $L_1$-functions:
\begin{equation}
\label{errorasconvolutionandproduct}
\hat{\zeta}_h (k)	= (-ik)^\alpha (\omega_{p,\alpha} (-ikh) -1) \hat {f} (k)
									=  h^\beta \hat{g}_{\beta,0,p} (kh) \widehat {f^{(\alpha +\beta)}} (k),
\end{equation}
where, by Lemma \ref{ghatandghatprimeinl2},  $\hat{g}_{\beta,0,p} (k) = \left( \frac {\omega_{p,\alpha} (-ik) -1}{(-ik)^\beta} \right)$ is the Fourier transform of an $L_1$ function $g_{\beta,0,p}$ and $f^{(\alpha +\beta)}\in L_1$
by assumption. Thus $\hat{\zeta}_h$ is indeed the Fourier transform of an $L_1$-function $\zeta_h$ and, by \eqref{normdensity}, \eqref{multipliernormequivalence} and Lemma \ref{ghatandghatprimeinl2},
\begin{align*}
\left\|\zeta_h\right\|_{L_1(\mathbb{R})}  &\leq  h^\beta\|T_{\hat{g}_{\beta,0,p}(h\cdot)}\|_{\mathcal{B}(L_1(\mathbb{R}))}\left\|f^{(\alpha +\beta)}\right\|_{L_1(\mathbb{R})}=h^\beta\left\|{g}_{\beta,0,p} \right\|_{L_1(\mathbb{R})}\left\|f^{(\alpha +\beta)}\right\|_{L_1(\mathbb{R})}\\
															&\leq Ch^\beta\left\|f\right\|_{\alpha +\beta}.
\end{align*}
Finally, the second statement follows the same way, taking Remark \ref{rem:l1r+} into account and noting that if $p\le 0$, then $\supp (\zeta_h)\subset \mathbb{R}_+$ and using Lemma \ref{ghatandghatprimeinl2} in \eqref{errorasconvolutionandproduct}.
\end{proof}

\begin{remark}\label{rem:grumu}
Let $\alpha \in \mathbb{R}_+, \; 2q-1< \alpha <2q+1, \; q \in \mathbb{N},$ and
\begin{equation}
\label{grunwaldmultgeneral}
\psi_{\alpha,h,p}(z)=(-1)^{q+1}h^{-\alpha} e^{-hpz} (1-e^{hz})^\alpha .
\end{equation}
Then the proof of Theorem \ref{maintheorem} shows that the Gr\"unwald formula \eqref{grunwald} can be expressed in the multiplier notation of Section \ref{sec:pre} as
$
A_{h,p}^{\alpha}=T_{(-1)^{q+1}\psi^p_h}
$
where
\begin{equation}
\label{grunwaldmultipliersymbol}
\psi^p_h(k)=\psi_{\alpha,h,p}(ik)=(-1)^{q+1}h^{-\alpha} e^{-ikhp} (1-e^{ikh})^\alpha .
\end{equation}
\end{remark}

Analyzing the error term $\hat \zeta_h$ in \eqref{errorasconvolutionandproduct} further  allows for higher order approximations by combining Gr\"unwald formulae with different shifts $p$ and accuracy $h$, cancelling out lower order error terms. Consider the Taylor expansion of $\omega_{p,\alpha}$ in \eqref{omegataylor} and let $N\ge 0$ and for $0\le j\le N$ let $b_j,p_j\in\R$ and $c_j>0$ be such that there exist $d_j$ with
\begin{equation*}\label{hogs}
\sum_{j=0}^{N} b_j \omega_{p_j,\alpha}(c_jz)-1=\sum_{j=N+1}^\infty d_j z^j
\end{equation*}
for $|z|<2R$; i.e. consider a linear combinations of $\omega_{p_j,\alpha}$'s cancelling out the lower order terms. Define
\begin{equation}\label{higherOrderA}
\tilde A^\alpha_hf:=\sum_{j=0}^{N} b_j A_{c_jh,p_j}^\alpha f=T_{(-1)^{n+1}\sum_{j=0}^{N} b_j\psi^{p_j}_{c_jh}}
\end{equation}
to be an $N+1$ order Gr\"unwald approximation. This is justified according to the following Corollary.

\begin{corollary}\label{HigherOrderConsistency}
  Let $0<\beta\le 1$, $N\in\mathbb N$ and $\tilde A^\alpha_h$ be an $N+1$ order Gr\"unwald approximation. Then there exists $C>0$ such that $f\in X_{\alpha+N+\beta}(\R)$ implies that
  $$\left\|\tilde A_h^\alpha f-f^{(\alpha)}\right\|_{L_1(\R)}\le Ch^{N+\beta}\|f\|_{\alpha+N+\beta}.$$
  as $h\to 0^+$. If $p_j\le 0$ for all $0\le j\le N$ and $f\in X_{\alpha+N+\beta}(\R_+)$ then
  $$\left\|\tilde A_h^\alpha f-f^{(\alpha)}\right\|_{L_1(\R_+)}\le Ch^{N+\beta}\|f\|_{\alpha+N+\beta}.$$
\end{corollary}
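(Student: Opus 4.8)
The plan is to repeat the argument of Theorem~\ref{maintheorem}, with the single shifted formula replaced by the linear combination defining $\tilde A_h^\alpha$. Taking Fourier transforms in \eqref{higherOrderA} and applying the computation \eqref{fouriertransformomega} (equivalently Remark~\ref{rem:grumu}) to each summand $A_{c_jh,p_j}^\alpha$ gives
$$
\widehat{\tilde A_h^\alpha f}(k)-\widehat{f^{(\alpha)}}(k)=(-ik)^\alpha\Bigl(\textstyle\sum_{j=0}^N b_j\,\omega_{p_j,\alpha}(-ic_jhk)-1\Bigr)\hat f(k)=(-ik)^\alpha\,\Omega(-ihk)\,\hat f(k),
$$
where $\Omega(z):=\sum_{j=0}^N b_j\,\omega_{p_j,\alpha}(c_jz)-1$, which by hypothesis equals $\sum_{j\ge N+1}d_jz^j$ for $|z|<2R$. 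As in the passage to \eqref{errorasconvolutionandproduct}, I would then pull out the factor $h^{N+\beta}$ by writing $\Omega(-ihk)=h^{N+\beta}(-ik)^{N+\beta}\hat G(hk)$ with $\hat G(k):=\Omega(-ik)/(-ik)^{N+\beta}$, so that
$$
\widehat{\tilde A_h^\alpha f}(k)-\widehat{f^{(\alpha)}}(k)=h^{N+\beta}\,\hat G(hk)\,\widehat{f^{(\alpha+N+\beta)}}(k).
$$

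The heart of the matter is to verify that $G\in L_1(\R)$. For this I would expand each $\omega_{p_j,\alpha}$ using \eqref{omegataylor}: writing $a_m^{(j)}$ for the Taylor coefficients of $\omega_{p_j,\alpha}$, Lemma~\ref{ghatandghatprimeinl2} tells us that $\omega_{p_j,\alpha}(-ic_jk)-\sum_{m=0}^N a_m^{(j)}(-ic_jk)^m=(-ic_jk)^{N+\beta}\hat g_{\beta,N,p_j}(c_jk)$ with $g_{\beta,N,p_j}\in L_1(\R)$. Substituting into $\Omega(-ik)$ and collecting terms, the polynomial part of $\Omega(-ik)$ is $\sum_{m=0}^N\bigl(\sum_{j=0}^N b_j a_m^{(j)}c_j^m\bigr)(-ik)^m-1$; but the defining relation of an $N+1$ order Gr\"unwald approximation forces $\sum_{j=0}^N b_j a_m^{(j)}c_j^m=1$ for $m=0$ and $=0$ for $1\le m\le N$, so this polynomial part cancels identically. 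Using $(-ic_jk)^{N+\beta}=c_j^{N+\beta}(-ik)^{N+\beta}$ (legitimate since $c_j>0$), what remains is
$$
\hat G(k)=\sum_{j=0}^N b_j c_j^{N+\beta}\,\hat g_{\beta,N,p_j}(c_jk),
$$
a finite linear combination of dilates of Fourier transforms of $L_1$ functions; hence $G\in L_1(\R)$, and by Lemma~\ref{ghatandghatprimeinl2} together with the scaling identity \eqref{multipliernormequivalence} its norm is bounded by a constant $C$ depending only on $\alpha,N,\beta$ and the $b_j,c_j,p_j$.

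It then remains to conclude exactly as in Theorem~\ref{maintheorem}. The error $\tilde\zeta_h:=\tilde A_h^\alpha f-f^{(\alpha)}$ is the $L_1$ function with $\widehat{\tilde\zeta_h}(k)=h^{N+\beta}\hat G(hk)\,\widehat{f^{(\alpha+N+\beta)}}(k)$, so by \eqref{normdensity} and \eqref{multipliernormequivalence},
$$
\|\tilde\zeta_h\|_{L_1(\R)}\le h^{N+\beta}\,\|T_{\hat G(h\cdot)}\|_{\mathcal B(L_1)}\,\|f^{(\alpha+N+\beta)}\|_{L_1}=h^{N+\beta}\|G\|_{L_1}\,\|f\|_{\alpha+N+\beta}\le Ch^{N+\beta}\|f\|_{\alpha+N+\beta}.
$$
For the half-line statement, if $p_j\le 0$ for every $j$ then Lemma~\ref{ghatandghatprimeinl2} gives $\supp g_{\beta,N,p_j}\subset\R_+$, hence $\supp G\subset\R_+$ and $\supp\tilde\zeta_h\subset\R_+$; since each $A_{c_jh,p_j}^\alpha$ restricts to an operator on $L_1(\R_+)$ by Remark~\ref{rem:l1r+}, the identical estimate holds with $L_1(\R_+)$ norms. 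I do not anticipate a genuine obstacle here: the hard analytic input — that $g_{\beta,N,p}\in L_1$ — is already supplied by Lemma~\ref{ghatandghatprimeinl2}, and the only points requiring care are keeping track of the dilation factors $c_j^{N+\beta}$ and checking that the order conditions annihilate the polynomial part of $\hat G$.
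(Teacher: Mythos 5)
Your proposal is correct and follows essentially the same route as the paper: both proofs insert the Taylor polynomials of the $\omega_{p_j,\alpha}$ (whose weighted sum cancels against the $-1$ by the order conditions $\sum_j b_j=1$ and $\sum_j b_j a_{j,n}c_j^n=0$ for $1\le n\le N$), factor out $h^{N+\beta}(-ik)^{N+\beta}$, and identify the remaining multiplier as the finite sum $\sum_j b_jc_j^{N+\beta}\hat g_{\beta,N,p_j}(c_j\cdot)$ handled by Lemma \ref{ghatandghatprimeinl2} and the scaling identity \eqref{multipliernormequivalence}. Your treatment of the half-line case via the support statement of Lemma \ref{ghatandghatprimeinl2} is also exactly what the paper's ``follows along the same lines'' is implicitly invoking.
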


\begin{proof} Note that $\sum_{j=0}^N b_j=1$ and for $n>0$, we have that $\sum_{j=0}^N b_j a_{j,n}c_j^n=0$.
  Following the same argument that led to \eqref{errorasconvolutionandproduct}, we obtain that the error term can be expressed as
  \begin{equation*}\begin{split}\hat \zeta_h(k)=&\widehat{\left(\tilde A_h^\alpha f-f^{(\alpha)}\right)}(k)=\sum_{j=0}^N b_j\left(\omega_{p_j,\alpha}(-ikc_jh)-1\right)(-ik)^\alpha\hat f(k)\\
  =&\sum_{j=0}^N b_j\left(\omega_{p_j,\alpha}(-ikc_jh)-\sum_{n=0}^N a_{j,n}(-ikc_jh)^n\right)(-ik)^\alpha\hat f(k)\\
  =&h^{N+\beta}\sum_{j=0}^N b_jc_j^{N+\beta}\frac{\omega_{p_j,\alpha}(-ikc_jh)-\sum_{n=0}^N a_{j,n}(-ikc_jh)^n}{(-ikc_jh)^{N+\beta}}(-ik)^{\alpha+N+\beta}\hat f(k),
  \end{split}
  \end{equation*}
  where $a_{j,n}$ are the Taylor coefficients of $\omega_{p_j,\alpha}$. By Lemma \ref{ghatandghatprimeinl2}, $$\hat g(k)=\sum_{j=0}^N b_jc_j^{N+\beta}\frac{\omega_{p_j,\alpha}(-ikc_j)-\sum_{n=0}^N a_{j,n}(-ikc_j)^n}{(-ikc_j)^{N+\beta}}$$ is the finite sum of Fourier transforms of $L_1$ functions and hence $\hat \zeta_h$ is the Fourier transform of an $L_1$ function with
  $$\| \zeta_h\|_{L_1(\R)}\le Ch^{N+\beta}\|f\|_{\alpha+N+\beta}.$$
  The second statement follows along the same lines.
\end{proof}

\section{Stability and Smoothing: Semigroups generated by periodic multipliers approximating the fractional derivative}

The next result is the main technical tool of the paper. It gives a sufficient condition for multipliers associated with difference schemes approximating the fractional derivative to lead to stable schemes with desirable smoothing. In error estimates  later, the smoothing of the schemes will be used in an essential way to reduce the regularity requirements on the initial data to obtain optimal convergence rates when considering space-time discretizations of Cauchy problems with fractional derivatives or, more generally, fractional powers of operators.
\begin{theorem}
\label{generalresult}
Let $\alpha \in \mathbb{R}_+ $ and $\psi$ be an absolutely continuous $2\pi$-periodic function that satisfies the following:
\begin{enumerate}
\item[(i)] $\left| \psi (k) \right| \leq C \left|k \right|^\alpha \text{for some} \; C>0,$
\item[(ii)] $\left| \psi' (k) \right| \leq C' \left|k \right|^{\alpha -1}\text{for some} \; C'>0,$
\item[(iii)] $\Re(\psi(k)) \leq -c \left|k \right|^\alpha \text{for some} \; c>0.$
\end{enumerate}
Then $\psi \in W_{r,per}^1[-\pi,\pi],$ where $r=2$ if $\alpha >\frac{1}{2}$ and $r < \frac{1}{1-\alpha}$ if $ \alpha \le \frac{1}{2}.$ Moreover,
\begin{enumerate}
\item[(a)] $\left\|T_{e^{t\psi}}\right\|_{\mathcal{B} (L_1)} \leq K \;\text{for}\; t\geq 0,$
\item[(b)] $\left\|T_{\psi e^{t\psi}}\right\|_{\mathcal{B} (L_1)} \leq \frac{M}{t} \;\text{for}\; t> 0,$
\end{enumerate}
where $K$ and $M$ depend on $c,C \; \text{and}\; C'$ above.
\end{theorem}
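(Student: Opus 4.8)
The plan is to derive both (a) and (b) from the periodic Carlson inequality of Theorem~\ref{thm:eta}, applied to the $2\pi$-periodic functions $g=e^{t\psi}$ and $g=\psi e^{t\psi}$, using the Gaussian-type decay provided by (iii) to produce the correct powers of $t$ (all $L_r$-norms below are over $[-\pi,\pi]$). First I would record the regularity claim. Since $\psi$ is absolutely continuous and $2\pi$-periodic it is bounded, so $\psi\in L_r[-\pi,\pi]$ for every $r$; the only obstruction to $\psi'\in L_r[-\pi,\pi]$ is the behaviour at $k=0$, where by (ii), $|\psi'(k)|^r\le (C')^r|k|^{(\alpha-1)r}$, which is integrable on a neighbourhood of $0$ exactly when $(\alpha-1)r>-1$, i.e.\ $r(1-\alpha)<1$. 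This forces $r=2$ to be admissible when $\alpha>1/2$ (where $1/(1-\alpha)>2$), and $r<1/(1-\alpha)$ (hence automatically $r<2$) when $\alpha\le1/2$, giving $\psi\in W^1_{r,per}[-\pi,\pi]$ with $1<r\le2$. Because $|e^{t\psi}|\le1$ by (iii) and $\psi$ is bounded, the functions $e^{t\psi}$ and $\psi e^{t\psi}$ are absolutely continuous, $2\pi$-periodic, with derivatives $t\psi'e^{t\psi}$, resp.\ $\psi'(1+t\psi)e^{t\psi}$, dominated pointwise by $L_r$-functions, so both are legitimate inputs for Theorem~\ref{thm:eta}.

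The computational core is the elementary estimate, valid for $\beta>-1$ and $t>0$,
$$\int_{-\pi}^{\pi}|x|^{\beta}e^{-ctr|x|^{\alpha}}\,\dd x\le\int_{\R}|x|^{\beta}e^{-ctr|x|^{\alpha}}\,\dd x=\frac{2\Gamma\big((\beta+1)/\alpha\big)}{\alpha\,(ctr)^{(\beta+1)/\alpha}},$$
obtained via the substitution $u=ctr|x|^{\alpha}$. Combining this — with $\beta\in\{0,(\alpha-1)r,\alpha r,(2\alpha-1)r\}$, each $>-1$, the two nontrivial cases $(\alpha-1)r$ and $(2\alpha-1)r$ being exactly why the range of $r$ is imposed (for $(2\alpha-1)r$ one uses $1/(1-\alpha)\le1/(1-2\alpha)$ when $\alpha\le1/2$) — with (i), (ii), and the bound $|e^{t\psi(x)}|\le e^{-ct|x|^{\alpha}}$ from (iii) yields constants $D_1,\dots,D_4$ depending only on $\alpha,r,c,C,C'$ with
$$\|e^{t\psi}\|_{L_r}\le D_1 t^{-1/(\alpha r)},\quad \|(e^{t\psi})'\|_{L_r}=t\|\psi'e^{t\psi}\|_{L_r}\le D_2 t^{1/(\alpha s)},$$
and, splitting $(1+Ct|x|^{\alpha})^r\le 2^{r-1}(1+C^rt^r|x|^{\alpha r})$,
$$\|\psi e^{t\psi}\|_{L_r}\le D_3 t^{-1-1/(\alpha r)},\quad \|(\psi e^{t\psi})'\|_{L_r}\le D_4 t^{-1+1/(\alpha s)},$$
where $1/r+1/s=1$ and one repeatedly uses the identity $\big((\alpha-1)r+1\big)/(\alpha r)=1-1/(\alpha s)$. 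For the zeroth Fourier coefficient, $|a_0|\le1$ for $e^{t\psi}$, while for $\psi e^{t\psi}$, using $\sup_{u\ge0}ue^{-ctu}=1/(ect)$ together with (i), one gets $|a_0|\le C/(ect)$.

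For $t\ge1$, Theorem~\ref{thm:eta} now gives $\|T_{e^{t\psi}}\|_{\mathcal B(L_1)}\le 1+C(D_1t^{-1/(\alpha r)})^{1/s}(D_2t^{1/(\alpha s)})^{1/r}=1+CD_1^{1/s}D_2^{1/r}$, the powers of $t$ cancelling since $-1/(\alpha rs)+1/(\alpha sr)=0$; and $\|T_{\psi e^{t\psi}}\|_{\mathcal B(L_1)}\le C/(ect)+C(D_3t^{-1-1/(\alpha r)})^{1/s}(D_4t^{-1+1/(\alpha s)})^{1/r}$, whose second term has $t$-exponent $-1/s-1/r=-1$, so the whole bound is $\le M/t$. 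For $0<t\le1$ I would instead use the crude bounds coming from $|e^{t\psi}|\le1$ and the boundedness of $\psi$, namely $\|e^{t\psi}\|_{L_r}\le(2\pi)^{1/r}$, $\|(e^{t\psi})'\|_{L_r}\le t\|\psi'\|_{L_r}$, $\|\psi e^{t\psi}\|_{L_r}\le\|\psi\|_{\infty}(2\pi)^{1/r}$, $\|(\psi e^{t\psi})'\|_{L_r}\le(1+\|\psi\|_{\infty})\|\psi'\|_{L_r}$, which make the Carlson term bounded by a constant on $(0,1]$; this proves (a) there, and for (b) a constant is $\le M/t$ on $(0,1]$. Piecing the two regimes together establishes (a) and (b), with $K,M$ depending only on $c,C,C'$ (and $\alpha,r$). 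The only genuine difficulty is the bookkeeping: verifying that every Gamma integral converges — which is precisely what pins down the admissible range of $r$ — and checking that the powers of $t$ cancel exactly as required; once the exponents are tracked correctly, the conclusions follow from Theorem~\ref{thm:eta} essentially mechanically.
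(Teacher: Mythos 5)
Your proposal is correct and follows essentially the same route as the paper: apply the periodic Carlson inequality of Theorem~\ref{thm:eta} to $e^{t\psi}$ and $\psi e^{t\psi}$, bound $|a_0|$ and the $L_r$-norms via (i)--(iii) and the substitution $\tau=t^{1/\alpha}|k|$, and check that the powers of $t$ combine to $t^0$ and $t^{-1}$ respectively. The only (harmless) deviation is your case split at $t=1$, which is unnecessary since the exponent cancellations hold for all $t>0$.
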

\begin{proof}
Set $r=2$ if $\alpha >\frac{1}{2}$ and $r < \frac{1}{1-\alpha}$ if $ \alpha \le \frac{1}{2},$ where
\begin{equation}
\label{holderconjugates}
1< r \leq 2 \quad \text{and}\quad \frac{1}{r}+\frac{1}{s}=1.
\end{equation}
Note that $r(\alpha -1) >-1,$ then from the assumptions we have $\psi \in W_{r,per}^1[-\pi,\pi],$ and for $t\ge 0,$ we also have $e^{t \psi} \in W_{r,per}^1[-\pi,\pi].$ By Theorem \ref{thm:eta},
\begin{equation}
\label{carlsonbeurlingforetpsigeneral}
\left\|T_{e^{t \psi }}\right\|_{\mathcal{B} (L_1)}\leq \left|a_0 \right| + C \left\| e^{t \psi }\right\|^{\frac{1}{s}}_{L_r}\left\|(e^{ t \psi })'\right\|^{\frac{1}{r}}_{L_r}.
\end{equation}
Firstly $\left|a_0 \right|= \left|\frac{1}{2\pi}\int_{-\pi}^\pi e^{t \psi (k)} dk \right| \le \frac{1}{2\pi} \int_{-\pi}^\pi e^{t \Re (\psi (k))} \,\dd k \le \frac{1}{2\pi} \int_{-\pi}^\pi e^{-ct |k|^\alpha} \,\dd k \le 1,$
where we have used Assumption (iii). Now, using Assumption (iii) again together with the substitution $\tau = t^{\frac{1}{\alpha}}|k|,$
\begin{align}
\label{etpsiestimategeneral}
\left\|e^ {t \psi}\right\|_{L_r[-\pi,\pi]} ^{\frac{1}{s}}= & \left(\int_{-\pi} ^ \pi \left | e^{t \psi (k)} \right |^r dk \right)^ \frac{1}{rs}
						 	\le C \left( \int_{-\pi}^\pi e^{rt \Re (\psi (k))} \,\dd k \right)^\frac{1}{rs} \nonumber\\
						 	\le & C \left( \int_{-\pi}^\pi e^{-rct |k|^\alpha} \,\dd k \right) ^{\frac{1}{rs}}
							\le C \left( \frac{1}{t^{\frac{1}{\alpha}}}\int_{\mathbb{R}} e^{-rc|\tau|^\alpha} \,\dd\tau \right)^{\frac{1}{rs}}
							\le C t^{\frac{-1}{\alpha rs}}.
\end{align}
Making use of Assumptions (ii) and (iii), we have
\begin{equation*}
\label{e2}
\left | \frac{d}{dk}\left(e^{ t \psi(k) }\right)\right |^r = \left| t \frac{d\psi (k)}{dk} e^{ t \psi (k)}\right|^r
									\leq C \left(t^r \left|k\right|^{r (\alpha -1)}\right)e^{-rct\left|k\right|^\alpha},
\end{equation*}
Since $r (\alpha -1) >-1,$ an application of \eqref{holderconjugates} and the substitution $\tau = t^{\frac{1}{\alpha}}|k|,$ yields
\begin{align}
\label{derivativeetpsiestimategeneral}
\left\|\left(e^{ t \psi  }\right)'\right\|_{L_r[-\pi,\pi]}^{\frac{1}{r}}
																			\leq & C \left (t^{\frac{r-1}{\alpha}} \int_{\mathbb{R}} \tau^{r (\alpha -1)} e^{-rc \tau^ \alpha } d \tau \right ) ^{\frac{1}{r^2}}
																			\leq  C  t^\frac{1}{\alpha rs}.
\end{align}
The proof of (a) is complete in view of \eqref{carlsonbeurlingforetpsigeneral}, \eqref{etpsiestimategeneral} and \eqref{derivativeetpsiestimategeneral}.

We have that $\psi e^{t\psi}\in W_{r,per}^1[-\pi,\pi]$ and once again by Theorem \ref{thm:eta},
\begin{equation}
\label{cpsigeneral}
\left\|T_{\psi e^{t \psi }}\right\|_{\mathcal{B} (L_1)} \leq \left|a_0 \right|+ C \left\|\psi e^{t \psi }\right\|^{\frac{1}{s}}_{L_r[-\pi,\pi]}\left\|\left( \psi e^{ t \psi }\right)'\right\|^{\frac{1}{r}}_{L_r[-\pi,\pi]}.
\end{equation}
Note that $\left|a_0 \right| \le \frac{1}{2\pi} \int_{-\pi}^\pi \left|\psi(k)\right|e^{t \Re (\psi (k))} \,\dd k \le C \int_{-\pi}^\pi \left|k\right|^\alpha e^{-ct |k|^\alpha} \,\dd k \le C/t,$
where we have used Assumptions (i) and (iii). The use of the substitution $\tau = t^{\frac{1}{\alpha}}|k|$ and the Assumptions (i) and (iii), yield
\begin{equation}\label{dpgeneral}
\left\| \psi e^{t \psi }\right\|^{\frac{1}{s}}_{L_r[-\pi,\pi]}\le C\left( \int_{\mathbb{R}}|k|^{r\alpha}e^{-rct|k|^{\alpha}}\,\dd k \right)^{\frac{1}{rs}}\le Ct^{-\frac{1}{s}-\frac{1}{\alpha r s}}.
\end{equation}
We also have by virtue of \eqref{derivativeetpsiestimategeneral} and the three assumptions,
\begin{align*}
\left|\frac{d}{dk}(\psi e^{ t \psi })(k)\right|^r
& =\left| \psi(k) \frac{d}{dk}\left(e^{ t \psi (k) }\right) + \frac{d\psi(k)}{dk}  e^{ t \psi (k)}\right|^r\\
& \le 2^{r-1}\left(t^r\left|\psi(k)\right|^r + 1\right)\left|\left(\psi(k)\right)' \right|^r \left| e^{ t \psi (k) } \right|^r \\
&\le C(|k|^{r(\alpha-1)}+t^r |k|^{r(2\alpha -1)})e^{-rct|k|^{\alpha}}.
\end{align*}
Thus, using \eqref{holderconjugates} and the substitution $\tau = t^{\frac{1}{\alpha}}|k|,$ and noting that $r(2\alpha -1) > r(\alpha -1) >-1,$
\begin{align}
\label{derivativedpgeneral}
\left\|\frac{d}{dk}(\psi e^{ t \psi })\right\|_{L_r[-\pi,\pi]}^{\frac{1}{r}}
\le & C t^{-\frac{1}{r}+\frac{1}{\alpha r s}}\left( \int_{\mathbb{R}}(|\tau|^{r(\alpha-1)}+|\tau|^{r(2\alpha -1)})e^{-rc \tau^{\alpha}}\,\dd \tau \right) ^{\frac{1}{r^2}}\nonumber \\
\leq & C t^{-\frac{1}{r}+\frac{1}{\alpha r s}},
\end{align}
and the proof of (b) is complete in view of \eqref{holderconjugates}, \eqref{cpsigeneral}, \eqref{dpgeneral} and \eqref{derivativedpgeneral}.
\end{proof}

\subsection{The shifted Gr\"{u}nwald formula of order $1$}
\label{spectrumofshiftedgrunwald}

First, we make the following two observations about the multiplier associated with the shifted Gr\"{u}nwald formula given by \eqref{grunwaldmultipliersymbol}. Note that for $p,k \in \mathbb{R}$ and $h, \alpha \in \mathbb{R}_+, $ such that $2q-1 < \alpha < 2q+1,\; q \in \mathbb{N},$
\begin{equation}
\label{psiintermsofomega}
\psi^p_h(k)=(-1)^{q+1}h^{-\alpha} e^{-ikhp} (1-e^{ikh})^\alpha = (-1)^{q+1} h^{-\alpha}(-ikh)^\alpha \omega_{p,\alpha}(-ikh),
\end{equation}
where $\omega_{p,\alpha}(z)$ is given by \eqref{omega}. Note also that
\begin{equation}
\label{scalingpsi}
\psi^p_h(k)=h^{-\alpha} \psi^p_1(hk).
\end{equation}
We now show that the range of the symbol associated with the shifted Gr\"{u}nwald formula is completely contained in a half-plane if and only if the shift is optimal.

\begin{prop}
\label{thm:spg}
Let
$\psi^p_h (k)= (-1)^{q+1} h^{-\alpha}e^{-iphk}(1-e^{ikh})^\alpha,$
where the shift $p\in \mathbb{N}$, $k \in \mathbb{R}$, $h>0$, $\alpha \in \mathbb{R}_+$ and $q \in \mathbb{N}$ such that $2q-1 < \alpha < 2q+1$.  Then
\begin{enumerate}
\item[(a)] $\psi^p_h$ satisfies Assumptions (i) and (ii) of Theorem \ref{generalresult} with $C,\;C'$ independent of $h.$
\item[(b)] $\Re (\psi^p_h)$ does not change sign if and only if $\left|p- \frac{\alpha}{2} \right| < \frac{1}{2}$ if and only if $\psi^p_h$ satisfies the Assumption (iii) of Theorem \ref{generalresult} with $c$ independent of $h.$
\end{enumerate}
\end{prop}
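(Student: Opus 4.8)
The plan is to reduce both parts to the case $h=1$ via the scaling identity \eqref{scalingpsi}. Since $\psi_h^p(k)=h^{-\alpha}\psi_1^p(hk)$, we have $|\psi_h^p(k)|=h^{-\alpha}|\psi_1^p(hk)|$, $|(\psi_h^p)'(k)|=h^{1-\alpha}|(\psi_1^p)'(hk)|$ and $\Re\psi_h^p(k)=h^{-\alpha}\Re\psi_1^p(hk)$, so inequalities of the form $|\psi_1^p(\xi)|\le C|\xi|^\alpha$, $|(\psi_1^p)'(\xi)|\le C'|\xi|^{\alpha-1}$, $\Re\psi_1^p(\xi)\le-c|\xi|^\alpha$ valid for $|\xi|\le\pi$ transfer, with the \emph{same} constants, to $\psi_h^p$ on $|k|\le\pi/h$; that is the domain actually used when invoking Theorem \ref{generalresult}. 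Hence it suffices to treat $\psi_1^p$, and then the constants are automatically independent of $h$.

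For (a)(i) I would use $|1-e^{i\theta}|=2|\sin(\theta/2)|\le|\theta|$, which gives $|\psi_1^p(k)|=|1-e^{ik}|^\alpha\le|k|^\alpha$, so $C=1$ works. For (a)(ii) I would differentiate the representation \eqref{psiintermsofomega}, i.e. $\psi_1^p(k)=(-1)^{q+1}(-ik)^\alpha\omega_{p,\alpha}(-ik)$, obtaining
$$(\psi_1^p)'(k)=(-1)^{q+1}\Bigl(-i\alpha(-ik)^{\alpha-1}\omega_{p,\alpha}(-ik)+(-ik)^\alpha\tfrac{d}{dk}\omega_{p,\alpha}(-ik)\Bigr),$$
and then bound $|\omega_{p,\alpha}(-ik)|$ by the estimate on $i\R$ in \eqref{boundforomegaminusone} and $\bigl|\tfrac{d}{dk}\omega_{p,\alpha}(-ik)\bigr|$ by \eqref{boundforderivativeomega} on $(-\pi,\pi)$; this yields $|(\psi_1^p)'(k)|\le\alpha C_1|k|^{\alpha-1}+C_2|k|^\alpha\le(\alpha C_1+\pi C_2)|k|^{\alpha-1}$ for $|k|\le\pi$.

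For (b), the plan is to put $\psi_1^p$ in closed polar form on the fundamental domain. Using $1-e^{ik}=2\sin(k/2)\,e^{i(k-\pi)/2}$ for $k\in(0,\pi)$, together with the branch-cut convention for the fractional power (and the fact that here the principal branch agrees with the analytic continuation defining $\omega_{p,\alpha}$, the relevant argument staying in $(-\pi/2,\pi/2)$), one gets $(1-e^{ik})^\alpha=(2\sin(k/2))^\alpha e^{i\alpha(k-\pi)/2}$, hence
$$\psi_1^p(k)=(-1)^{q+1}\bigl(2\sin(k/2)\bigr)^\alpha e^{i\Theta(k)},\qquad\Theta(k)=k\Bigl(\tfrac{\alpha}{2}-p\Bigr)-\alpha\tfrac{\pi}{2}\sgn k,\quad k\in(0,\pi).$$
Since $\psi_1^p(-k)=\overline{\psi_1^p(k)}$, $\Re\psi_1^p$ is even, so $\Re\psi_1^p(k)=(-1)^{q+1}(2\sin(|k|/2))^\alpha\cos\Theta(k)$ on $(-\pi,\pi)\setminus\{0\}$, and it is enough to study $k\in(0,\pi)$, where $(2\sin(k/2))^\alpha>0$ and $\Theta$ is affine with $\Theta(0^+)=-\alpha\pi/2$, $\Theta(\pi^-)=-p\pi$. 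Thus $\Re\psi_1^p$ does not change sign iff $\cos\Theta$ has constant sign on $(0,\pi)$ iff the open interval between $-\alpha\pi/2$ and $-p\pi$ contains no odd multiple of $\pi/2$; dividing by $\pi/2$ and using that $-2p$ is an even integer while $-\alpha$ is not an odd integer (the hypothesis $2q-1<\alpha<2q+1$ rules out odd-integer $\alpha$, so no boundary coincidence occurs), this is equivalent to $-\alpha\in(-2p-1,-2p+1)$, i.e. $|p-\tfrac{\alpha}{2}|<\tfrac12$. This gives the first equivalence. For the second: when $|p-\alpha/2|<1/2$, the constant sign of $\cos\Theta$ on $(0,\pi)$ is read off from $\lim_{k\to0^+}\cos\Theta(k)=\cos(\alpha\pi/2)$, which has sign $(-1)^q$ because $\alpha\pi/2\in(q\pi-\tfrac\pi2,q\pi+\tfrac\pi2)$; hence $(-1)^{q+1}\cos\Theta(k)<0$ on $(0,\pi)$, and since $\cos\Theta$ extends continuously and without zeros to $[0,\pi]$ (the endpoint values $\cos(\alpha\pi/2)$ and $(-1)^p$ being nonzero), compactness yields $(-1)^{q+1}\cos\Theta(k)\le-c_1<0$ there. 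Combined with $(2\sin(|k|/2))^\alpha\ge(2/\pi)^\alpha|k|^\alpha$ for $|k|\le\pi$ this is Assumption (iii) with $c=c_1(2/\pi)^\alpha$, and scaling restores general $h$ with the same $c$; conversely (iii) $\Rightarrow$ no sign change is immediate since (iii) forces $\Re\psi_h^p\le0$ everywhere.

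I expect the polar-form step to be the only delicate point — pinning down $\arg(1-e^{ik})^\alpha$ under the chosen branch cut and checking it agrees with the $(-ik)^\alpha\omega_{p,\alpha}(-ik)$ definition — whereas once $\Theta$ is identified the sign analysis is elementary bookkeeping with multiples of $\pi/2$, made clean precisely by the exclusion of odd-integer $\alpha$ built into the hypothesis.
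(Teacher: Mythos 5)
Your proposal is correct and follows essentially the same route as the paper: reduce to $h=1$ by the scaling identity, bound $|\psi|$ and $|\psi'|$ via $\omega_{p,\alpha}$ and its derivative, and put $\psi_1^p$ in the polar form $(-1)^{q+1}(2\sin(k/2))^\alpha e^{i\Theta(k)}$ with the same affine phase $\Theta(k)=(\tfrac{\alpha}{2}-p)k-\tfrac{\alpha\pi}{2}$. The only (immaterial) differences are that you locate the zeros of $\cos\Theta$ by tracking odd multiples of $\pi/2$ in the range of $\Theta$ and obtain the lower bound on $|\cos\Theta|$ by compactness, whereas the paper rewrites the cosine as $\cos\bigl((\tfrac{\alpha}{2}-p)(k-\pi)\bigr)$ and bounds it explicitly by $\cos\bigl((\tfrac{\alpha}{2}-p)\pi\bigr)$.
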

\begin{proof}
Proof of (a):

Recall that by \eqref{boundforomegaminusone} and \eqref{boundforderivativeomega}, $\left| \omega_{p,\alpha}(-ikh)\right| \leq C$ and $\left| \omega'_{p,\alpha}(-ikh)\right| \leq C,$ for $k \in [-\pi,\pi],$ respectively. Thus using \eqref{psiintermsofomega}, for $k \in [-\pi,\pi],$
\begin{equation}
\label{modpsishiftedgrunwald}
\left|\psi^p_h (k)\right|\leq C \left|k\right|^\alpha, \text{and}
\end{equation}
\begin{equation}\label{derivativepsishiftedgrunwald}
\left| \frac{d \psi^p_h (k)}{dk} \right| =h^{-\alpha}\left|-\alpha ih (-ikh)^{\alpha -1} \omega_{p,\alpha} (-ikh) + ( -ikh)^\alpha \omega'_ {p,\alpha}(-ikh) \right | \leq  C|k|^{\alpha -1},
\end{equation}
for some $C>0.$

Proof of (b): It is enough to consider the case $h=1$ in view of \eqref{scalingpsi}, so let $\psi:=\psi^p_1$.
Since $\psi$ is $2 \pi$-periodic and $\psi(k)=\overline{\psi(-k)}$, it is sufficient to consider $k\in [0, \pi].$
Now
\begin{equation*}
\psi (k)				 = (-1)^{q+1} e^{-ipk}\left(e^{\frac{ik}{2}}(e^{\frac{-ik}{2}}-e^{\frac{ik}{2}})\right)^\alpha
								 = (-1)^{q+1} e^{i(\frac{\alpha}{2}-p)k}\left(-2i\sin \left(\frac{k}{2}\right)\right)^\alpha.
\end{equation*}
Using the fact that for $x\ge 0$, $(-ix)^\alpha=x^\alpha e^{-i\alpha\frac\pi 2}$,
we obtain
\begin{equation*}
\label{symbolexplicit}
\psi(k)=(-1)^{q+1} 2^\alpha  \sin^\alpha \left( \frac{k}{2} \right) e^{i(\frac{\alpha}{2}-p)k - \frac{i\alpha \pi}{2}};\;\;\; 0\le k\le \pi,
\end{equation*}
and therefore
\begin{equation}
\label{realpartsymbol}\begin{split}
\Re \left( \psi (k)\right) =& (-1)^{q+1} 2^\alpha  \sin^\alpha \left( \frac{k}{2} \right) \cos\left( (\frac{\alpha}{2}-p)k - \frac{\alpha \pi}{2}\right)\\
=&(-1)^{q+1-p} 2^\alpha  \sin^\alpha \left( \frac{k}{2} \right) \cos\left( (\frac{\alpha}{2}-p)(k - \pi)\right).
\end{split}
\end{equation}

Clearly, as $0\le k\le \pi$, \eqref{realpartsymbol} changes sign if and only if $\left| \frac{\alpha}{2}-p \right| > \frac{1}{2}$. Note that by assumption $|\frac{\alpha}{ 2}- p|\neq\frac12$. Furthermore, Assumption (iii) of Theorem \ref{generalresult} implies that there is no sign change. Hence all that remains to show is that $\left|p -\frac{\alpha}{2}\right| < \frac{1}{2}$ implies Assumption (iii) of Theorem \ref{generalresult}.  The fact that $c$ is independent of $h$ follows from \eqref{scalingpsi}.

Note that 
$\left| p - \frac{\alpha}{2}\right| < \frac{1}{2}$ implies that $p =q$ and hence, using that for $0\le x\le\pi$, $\sin(x/2)\ge x/\pi$ and $\cos((\frac{\alpha}{2}-p)(x-\pi))\ge \cos(-(\frac{\alpha}{2}-p)\pi)$ ,
\begin{equation}\label{reEst}\begin{split}\Re \left( \psi (k)\right) =&- 2^\alpha  \sin^\alpha \left( \frac{k}{2} \right) \cos\left( (\frac{\alpha}{2}-p)(k - \pi)\right)\\
\le&- 2^\alpha \left( \frac{k}{\pi} \right)^\alpha \cos\left( (\frac{\alpha}{2}-p)\pi\right) =- k^\alpha  2^\alpha\cos\left( (\frac{\alpha}{2}-p)\pi\right)/\pi^\alpha.
\end{split}\end{equation}
\end{proof}
Next, we show that with the optimal shift; i.e., $p=q$, the operators $T_{\psi_h}$ generate strongly continuous semigroups on $L_1(\mathbb{R})$, and in the case when $p=0$; i.e., $0<\alpha<1$, on $L_1(\mathbb{R}_+)$, that are bounded uniformly in $h$. Since the range of $\psi_h$ is always contained in the spectrum of $T_{\psi_h}$, the semigroups generated by $T_{\psi_h}$ will not be uniformly bounded if the shift is not optimal as shown by Proposition \ref{thm:spg}.
In fact we show more: if the shift is optimal then the semigroups are uniformly analytic in $h$; i.e., there is  $M> 0$ such that the uniform estimate $\|T_{\psi_he^{t\psi_h}}\|_{\mathcal{B} (L_1)}\le Mt^{-1}$ holds for $t,h>0$. This fact will have significance when proving error estimates for numerical schemes for fractional differential equations.

\begin{theorem}
\label{grunwaldscsganalsg}
Let $\alpha \in \mathbb{R}_+,\; 2p-1<\alpha<2p+1$, $p\in \mathbb{N},$ and
\begin{equation*}\label{psiop}
\psi_h(k)=(-1)^{p+1}h^{-\alpha}e^{-ipkh}\left( 1-e^{ikh}\right)^\alpha.
\end{equation*}
Then the following hold.
\begin{enumerate}
\item[(a)] $\{T_{e^{ t \psi_h}}\}_{t\ge 0}$ are strongly continuous semigroups on $L_1(\mathbb{R})$ that are bounded uniformly in $h>0$ and $t\ge 0$. In particular, if $1<\alpha<2$, then $\{T_{e^{ t \psi_h}}\}_{t\ge 0}$ is a positive contraction semigroup on $L_1(\mathbb{R})$ and for $0<\alpha<1$, on $L_1(\mathbb{R}_+).$
\item[(b)] The semigroups $\{T_{e^{ t \psi_h}}\}_{t\ge 0}$ are uniformly analytic in $h>0$; i.e. there is  $M> 0$ such that the uniform estimate $\|T_{\psi_he^{t\psi_h}}\|_{\mathcal{B} (L_1)}\le Mt^{-1}$ holds for $t,h>0$.
\end{enumerate}
\end{theorem}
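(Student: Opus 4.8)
The plan is to reduce the whole statement to the case $h=1$ and then invoke Theorem \ref{generalresult}. With the optimal shift $p$ (i.e.\ $2p-1<\alpha<2p+1$), Proposition \ref{thm:spg} tells us that $\psi_1:=\psi^p_1$ satisfies hypotheses (i)--(iii) of Theorem \ref{generalresult} with constants not involving $h$; since $\psi_1$ is moreover $2\pi$-periodic and absolutely continuous (it is $C^1$ when $\alpha>1$, and when $0<\alpha<1$ one has $\psi_1'(k)=O(|k|^{\alpha-1})$ near $k=0$, which is locally integrable), Theorem \ref{generalresult} applies to $\psi_1$. The bridge to general $h$ is the scaling relation \eqref{scalingpsi}: since $\psi_h(k)=h^{-\alpha}\psi_1(hk)$, putting $s:=th^{-\alpha}$ gives $e^{t\psi_h}(k)=(e^{s\psi_1})(hk)$ and $\psi_h(k)e^{t\psi_h(k)}=h^{-\alpha}(\psi_1e^{s\psi_1})(hk)$, so by the dilation-invariance \eqref{multipliernormequivalence} of the $L_1$-multiplier norm,
\[\|T_{e^{t\psi_h}}\|_{\mathcal{B}(L_1)}=\|T_{e^{s\psi_1}}\|_{\mathcal{B}(L_1)}\quad\text{and}\quad\|T_{\psi_he^{t\psi_h}}\|_{\mathcal{B}(L_1)}=h^{-\alpha}\,\|T_{\psi_1e^{s\psi_1}}\|_{\mathcal{B}(L_1)}.\]
Theorem \ref{generalresult}(a) then gives the uniform bound $\|T_{e^{t\psi_h}}\|_{\mathcal{B}(L_1)}\le K$ for all $t\ge0$, $h>0$, and Theorem \ref{generalresult}(b) gives $\|T_{\psi_he^{t\psi_h}}\|_{\mathcal{B}(L_1)}\le h^{-\alpha}M s^{-1}=M t^{-1}$, which is exactly assertion (b).

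To see that $\{T_{e^{t\psi_h}}\}_{t\ge0}$ is a strongly continuous semigroup, I would argue (for fixed $h$) as follows. By Remark \ref{rem:grumu}, $T_{\psi_h}=(-1)^{p+1}A^\alpha_{h,p}$, and since the Gr\"unwald weights $(-1)^m\binom{\alpha}{m}$ are absolutely summable (they are $O(m^{-1-\alpha})$ when $\alpha\notin\mathbb{N}$, and a finite sequence when $\alpha$ is the even integer $2p$), the operator $T_{\psi_h}$ is bounded on $L_1(\mathbb{R})$. Hence it generates the uniformly continuous semigroup $\sum_{n\ge0}\tfrac{t^n}{n!}T_{\psi_h}^n$, which by $T_{\psi_h}^n=T_{\psi_h^n}$ and norm convergence equals $T_{e^{t\psi_h}}$; strong (indeed norm) continuity at $t=0$ and the semigroup law $e^{(t+s)\psi_h}=e^{t\psi_h}e^{s\psi_h}$ are then immediate. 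Together with the uniform bound of the previous paragraph this proves the first part of (a).

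For positivity and contractivity I would leave the multiplier picture and work directly with \eqref{grunwald}: writing $S_yf:=f(\cdot-y)$ and $w_m:=(-1)^{p+1}(-1)^m\binom{\alpha}{m}$, we have $(-1)^{p+1}A^\alpha_{h,p}=h^{-\alpha}\sum_{m\ge0}w_m S_{(m-p)h}$. The sign pattern of the binomial coefficients together with $\sum_{m\ge0}(-1)^m\binom{\alpha}{m}=0$ shows that for $1<\alpha<2$ ($p=1$) we have $w_1=-\alpha$, $w_m\ge0$ for $m\neq1$ and $\sum_{m\neq1}w_m=\alpha$, while for $0<\alpha<1$ ($p=0$) we have $w_0=-1$, $w_m\ge0$ for $m\ge1$ and $\sum_{m\ge1}w_m=1$. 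In either case $(-1)^{p+1}A^\alpha_{h,p}=h^{-\alpha}(B-c_\alpha I)$ where $B$ is a nonnegative combination of translations with $\|B\|_{\mathcal{B}(L_1)}=c_\alpha$, so $T_{e^{t\psi_h}}=e^{-c_\alpha th^{-\alpha}}e^{th^{-\alpha}B}$ is positive with norm at most $e^{-c_\alpha th^{-\alpha}}e^{c_\alpha th^{-\alpha}}=1$. When $p=0$ every translation $S_{mh}$, $m\ge1$, maps $L_1(\mathbb{R}_+)$ into itself, so by Remark \ref{rem:l1r+} the whole construction restricts to a positive contraction semigroup on $L_1(\mathbb{R}_+)$; the same support consideration restricts the estimate (b) to $L_1(\mathbb{R}_+)$ in this range of $\alpha$.

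I expect the only real friction to be organisational rather than conceptual. First, Theorem \ref{generalresult} is stated for $2\pi$-periodic symbols whereas $\psi_h$ has period $2\pi/h$, so one genuinely has to pass through $\psi_1$ and \eqref{multipliernormequivalence}; making the factor $h^{-\alpha}$ cancel the $s^{-1}=h^{\alpha}t^{-1}$ coming from Theorem \ref{generalresult}(b) is the one computation to carry out with care. Second, the positivity and contractivity statements are invisible at the level of multiplier norms and require the sign-and-sum structure of the Gr\"unwald weights (equivalently, that $(-1)^{p+1}A^\alpha_{h,p}$ is, up to the factor $h^{-\alpha}$, a bounded generator of a pure-jump Markov semigroup); this is elementary but must be stated explicitly. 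Checking absolute continuity of $\psi_1$ near $k=0$ when $0<\alpha<1$ is a one-line verification worth including so that Theorem \ref{generalresult} applies verbatim.
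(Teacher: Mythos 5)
Your proposal is correct and follows essentially the same route as the paper: reduce to $h=1$ via \eqref{scalingpsi} and \eqref{multipliernormequivalence}, invoke Theorem \ref{generalresult} through Proposition \ref{thm:spg} for the uniform bounds and analyticity, and use the sign-and-sum structure of the Gr\"unwald weights for positivity and contractivity when $\alpha<2$. The only (harmless) deviations are organisational: you make the scaling computation for (b) and the absolute-continuity check near $k=0$ explicit, and you obtain strong continuity from boundedness of the generator rather than citing \cite[Proposition 8.1.3]{olivebook}, whereas the paper uses the contraction argument alone for $\alpha<2$ and reserves Theorem \ref{generalresult} for $\alpha>2$.
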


\begin{proof}
Proof of (a): To begin, note that $\psi_h(k)= \psi^p_h(k)$ with $p=q.$
We only have to show that $\left\|T_{e^{ t \psi_h }}\right\|_{\mathcal {B} (L_1)}\le K $, for all $t\ge 0$ and $h>0$, for some $K\ge 1$ and strong continuity follows by \cite[Proposition 8.1.3]{olivebook}. Furthermore, it is enough to consider $h=1$ in view of \eqref{multipliernormequivalence} and \eqref{scalingpsi}, so let $\psi:=\psi_1$.
First, let $0<\alpha<1$ or $1<\alpha<2$ and hence $p=0$ or $1$, respectively. We have, taking Remark \ref{rem:grumu} into account, that
\begin{align*}
(T_{\psi}f)(x)&=(-1)^{p+1}(\sum_{m=0}^\infty (-1)^m {\binom{\alpha}{m}} f(x-(m-p)))\\
&= -{\binom{\alpha}{p}}f(x)+(-1)^{p+1}\sum_{m=0,m\neq p}^\infty (-1)^m {\binom{\alpha}{m}} f(x-(m-p))\\
&=(-{\binom{\alpha}{p}}I\,f)(x)+(T_{\tilde{\psi}}f)(x).
\end{align*}
Since $(-1)^{p+1}(-1)^m {\binom{\alpha}{m}}\ge 0$, for $m\neq p$, it follows that $T_{\tilde{\psi}}$
is a positive operator on $L_1(\mathbb{R})$ (or, $L_1(\mathbb{R}_+)$ for $0<\alpha<1$ by recalling Remark \ref{rem:l1r+}) and so is $e^{tT_{\tilde{\psi}}}=T_{e^{ t \tilde{\psi} }}$. Therefore, noting the fact that $\sum_{m=0}^\infty (-1)^m {\binom{\alpha}{m}}=0,$
$$
T_{e^{ t \psi_1 }}=e^{tT_{\psi}}=e^{t(-{\binom{\alpha}{p}}I+T_{\tilde{\psi}})}=e^{-{\binom{\alpha}{p}}t}e^{tT_{\tilde{\psi}}}\ge 0,
$$
and
$$
\|T_{e^{ t \psi}}\|_{L_1}\le e^{-{\binom{\alpha}{p}}t}e^{t\|T_{\tilde{\psi}}\|_{L_1}}=e^{-{\binom{\alpha}{p}}t}e^{(-1)^{p+1}\sum_{m=0,m\neq p}^\infty (-1)^m {\binom{\alpha}{m}}t}=1.
$$
Let now $\alpha>2,$ then by Proposition \ref{thm:spg}, $\psi_h$ satisfies the hypothesis of Theorem \ref{generalresult}
and the proof of (a) is complete.

Proof of (b): Let $\alpha >0,$ then the statement follows from Theorem \ref{generalresult} in view of Proposition \ref{thm:spg}.

\end{proof}

\subsection{Examples of second order stable Gr\"{u}nwald-type formulae}\label{subsec:2order}

Let $\alpha \in \mathbb{R}_+, \;2q-1< \alpha <2q+1,\; \text{and} \: q \in \mathbb{N}.$ Consider the mixture of symbols of Gr\"{u}nwald formulae yielding a second order approximation of the form
\begin{equation*}\label{hiorgru}
\phi_h (k):= a \psi^{p_1}_{h}(k) + (1-a) \psi^{p_2}_{2h} (k),
\end{equation*}
where $\psi_h^p (k)=(-1)^{q+1}h^{-\alpha}e^{-ipkh}(1-e^{ikh})^\alpha,$ the symbol of the $p$-shifted Gr\"{u}nwald formula and thus $\tilde A_h^\alpha=(-1)^{q+1}T_{\phi_h}$. There are of course many combinations of $\alpha, a, p_1$, and $p_2$ that yield a second order approximation; however only some are stable. It is straight-forward to show that if $0<\alpha<1$, then $a=2, p_1=p_2=0$ give a second order approximation and similarly for $1<\alpha<2$, $a=2-\frac{2}{\alpha},\;p_1=1,\;p_2=\frac{1}{2}$. So we only show stability.

\begin{prop}
\label{thm:mix}
Let $\phi_h (k)$ be as above, where $a=2,\;p_1=p_2=0,$ if $0<\alpha<1$ and $a=2-\frac{2}{\alpha},\;p_1=1,\;p_2=\frac{1}{2},$ if $1< \alpha <2.$  Then $\phi_h$ satisfies the assumptions of Theorem \ref{generalresult} with constants $c$, $C$ and $C'$ independent of $h$.
\end{prop}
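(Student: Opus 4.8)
The goal is to verify that the second-order mixed symbol $\phi_h$ satisfies Assumptions (i)--(iii) of Theorem \ref{generalresult} with constants independent of $h$. Since $\phi_h(k) = a\psi^{p_1}_h(k) + (1-a)\psi^{p_2}_{2h}(k)$ is a finite linear combination of shifted Gr\"unwald symbols, and Proposition \ref{thm:spg}(a) already gives Assumptions (i) and (ii) for each $\psi^{p_j}_{c_jh}$ with constants independent of $h$ (using the scaling \eqref{scalingpsi} to absorb the factor $c_j \in \{1,2\}$), the triangle inequality immediately yields (i) and (ii) for $\phi_h$. So the entire content of the proposition is Assumption (iii): the strict sectoriality estimate $\Re(\phi_h(k)) \le -c|k|^\alpha$ with $c>0$ independent of $h$.

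\textbf{Reduction and main computation.} By the scaling relation \eqref{scalingpsi}, it suffices to treat $h=1$; write $\phi := \phi_1$, and by $2\pi$-periodicity and the conjugate symmetry $\phi(k) = \overline{\phi(-k)}$ it is enough to bound $\Re(\phi(k))$ for $k \in [0,\pi]$. Using the explicit formula derived in the proof of Proposition \ref{thm:spg},
$$
\Re\big(\psi^{p}_1(k)\big) = (-1)^{q+1}\,2^\alpha \sin^\alpha\!\Big(\tfrac{k}{2}\Big)\cos\!\Big(\big(\tfrac{\alpha}{2}-p\big)k - \tfrac{\alpha\pi}{2}\Big),
$$
and the analogous formula for $\psi^{p_2}_2(k)$ obtained by replacing $k$ with $2k$ (so $\sin^\alpha(k/2)$ becomes $\sin^\alpha(k)$), I would write
$$
\Re(\phi(k)) = (-1)^{q+1}2^\alpha\Big[\, a\,\sin^\alpha\!\big(\tfrac{k}{2}\big)\cos\!\big((\tfrac{\alpha}{2}-p_1)k - \tfrac{\alpha\pi}{2}\big) + (1-a)\,\sin^\alpha(k)\cos\!\big((\tfrac{\alpha}{2}-p_2)2k - \tfrac{\alpha\pi}{2}\big)\Big].
$$
For the case $0<\alpha<1$ (so $q=0$, $p_1=p_2=0$, $a=2$): here $(-1)^{q+1} = -1$ and both cosine arguments reduce to $\tfrac{\alpha}{2}k - \tfrac{\alpha\pi}{2} = -\tfrac{\alpha}{2}(\pi - k)$, which lies in $[-\tfrac{\alpha\pi}{2}, 0] \subset (-\tfrac{\pi}{2}, 0]$, so both cosines are bounded below by $\cos(\tfrac{\alpha\pi}{2}) > 0$. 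Since $a = 2 > 0$ and $1-a = -1 < 0$, I must control the negative contribution: using $\sin(k) \le 2\sin(k/2)$... actually more carefully, near $k = \pi$ the term $\sin^\alpha(k)$ vanishes while $\sin^\alpha(k/2)$ does not, which is exactly what makes stability work. The cleanest route is to use $\sin(k) = 2\sin(k/2)\cos(k/2)$, so $\sin^\alpha(k) = 2^\alpha \sin^\alpha(k/2)\cos^\alpha(k/2)$, giving
$$
-\Re(\phi(k)) = 2^\alpha \sin^\alpha\!\big(\tfrac{k}{2}\big)\Big[2\cos\!\big(\tfrac{\alpha}{2}(\pi-k)\big) - 2^\alpha \cos^\alpha\!\big(\tfrac{k}{2}\big)\cos\!\big(\alpha(\pi-k)\big) + (\text{sign bookkeeping})\Big],
$$
and then show the bracket is bounded below by a positive constant on $[0,\pi]$ (at $k=\pi$ it equals $2 - 0 = 2$; at $k=0$ it equals $2\cos(\tfrac{\alpha\pi}{2}) - 2^\alpha\cos(\alpha\pi)$, which one checks is positive for $0<\alpha<1$), and finally combine with $\sin^\alpha(k/2) \ge (k/\pi)^\alpha$ as in \eqref{reEst} to get $-\Re(\phi(k)) \ge c k^\alpha$.

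\textbf{The case $1<\alpha<2$ and the main obstacle.} For $1<\alpha<2$ we have $q=1$, $(-1)^{q+1}=1$, $p_1=1$, $p_2=\tfrac12$, $a = 2 - \tfrac{2}{\alpha}$. Now $1-a = \tfrac{2}{\alpha} - 1 \in (0,1)$, so \emph{both} coefficients $a$ and $1-a$ are positive — a structural advantage over the previous case. The first cosine argument is $(\tfrac{\alpha}{2}-1)k - \tfrac{\alpha\pi}{2}$ and the second is $(\tfrac{\alpha}{2}-\tfrac12)2k - \tfrac{\alpha\pi}{2} = (\alpha-1)k - \tfrac{\alpha\pi}{2}$; I would rewrite these (as in \eqref{realpartsymbol}) as $-(\tfrac{\alpha}{2}-1)(\pi-k) - \tfrac{\pi}{2}$ and $-(\alpha-1)(\pi - k) \cdot$ ... and track that for $k\in[0,\pi]$ and $1<\alpha<2$ the relevant cosines stay in a half-plane bounded away from zero, so that $\Re(\phi(k))$ is a sum of two terms each $\le -c_j\sin^\alpha(\cdot) \le 0$, at least one of which dominates $k^\alpha$ near $k=0$, while near $k=\pi$ one uses the $\sin^\alpha(k/2)$ factor that does not vanish. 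I expect the main obstacle to be the case $0 < \alpha < 1$: there the coefficient $1-a = -1$ is negative, so the estimate is a genuine cancellation argument rather than a term-by-term bound, and one must verify the positivity of the resulting bracket uniformly on all of $[0,\pi]$ — this requires a short but non-trivial trigonometric/calculus argument (checking endpoints and ruling out interior sign changes, e.g.\ via monotonicity in $k$ or a direct estimate), whereas for $1<\alpha<2$ positivity of both coefficients makes the bound almost automatic. In both cases, once the lower bound $-\Re(\phi(k)) \ge c\sin^\alpha(k/2)$ (or $\ge c[\sin^\alpha(k/2) + \sin^\alpha(k)]$) is established on $[0,\pi]$, the inequality $\sin(k/2) \ge k/\pi$ converts it to $-\Re(\phi(k)) \ge c'|k|^\alpha$ exactly as in \eqref{reEst}, completing Assumption (iii); Assumptions (i) and (ii) were already disposed of above, so the proof concludes.
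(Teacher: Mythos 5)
Your handling of Assumptions (i) and (ii) matches the paper's (triangle inequality plus \eqref{modpsishiftedgrunwald}--\eqref{derivativepsishiftedgrunwald} and the scaling \eqref{scalingpsi}), and your reduction of (iii) to $h=1$, $k\in[0,\pi]$ with the double-angle rewriting of $\Re(\phi)$ is also the paper's starting point. But the core of Assumption (iii) is not actually proved, and the strategic claim you lean on is false. First, in the case $1<\alpha<2$ you assert that because $a>0$ and $1-a>0$ the two terms are separately nonpositive, making the bound ``almost automatic''. This is wrong: with $p_2=\tfrac12$ the second cosine argument is $B=(\alpha-1)k-\tfrac{\alpha\pi}{2}$, which equals $-\tfrac{\pi}{2}$ at $k=\tfrac{\pi}{2}$, so $\cos B>0$ for $k\in(\tfrac{\pi}{2},\pi]$ and there the second term pushes $\Re(\phi)$ in the wrong direction; both cases, not just $0<\alpha<1$, require a genuine cancellation. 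Second, in the case $0<\alpha<1$ you check the bracket only at $k=0$ and $k=\pi$ and defer the uniform interior lower bound to ``a short but non-trivial argument''; that bound is exactly the content of the proposition and cannot be left unproved --- the crude estimate $2\cos A-\cos^\alpha(k/2)\cos B\ge 2\cos(\tfrac{\alpha\pi}{2})-1$ already fails for $\alpha\ge\tfrac23$. (There are also computational slips: for $0<\alpha<1$ the two cosine arguments are $\tfrac{\alpha}{2}k-\tfrac{\alpha\pi}{2}$ and $\alpha k-\tfrac{\alpha\pi}{2}$, which are not equal, and the factor $2^\alpha$ in front of $\cos^\alpha(k/2)$ in your bracket is spurious, since it cancels against the $2^{-\alpha}$ coming from $\psi^{p_2}_{2h}$.)

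The paper closes this gap with a single device covering both cases: it sets $F(k)=(-1)^{q+1}\left(a\cos A+(1-a)\cos B\right)$, i.e.\ the bracket with the factor $\cos^\alpha(k/2)$ removed. Because the parameters satisfy $p_1=2p_2$ and $(1-a)(\alpha-2p_2)=-a(\tfrac{\alpha}{2}-p_1)$, the derivative collapses via a product-to-sum identity to $F'(k)=2C(\alpha)\cos\left(\tfrac{A+B}{2}\right)\sin\left(\tfrac{A-B}{2}\right)$, whose factors have constant sign on $[0,\pi]$; hence $F'>0$ and $F(k)\le F(\pi)<0$. One then splits on the sign of the second term: if $(-1)^{q+1}(1-a)\cos B\le 0$ it is dropped and \eqref{reEst} applies to the optimally shifted first term alone; if it is positive, the bound $\cos^\alpha(k/2)\le 1$ gives $\Re(\phi(k))\le 2^\alpha\sin^\alpha(k/2)\,F(k)\le 2^\alpha\sin^\alpha(k/2)\,F(\pi)$, and $\sin(k/2)\ge k/\pi$ finishes. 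You would need to supply an argument of comparable substance (this monotonicity argument, or an equivalent uniform estimate on the bracket) to complete your proof.
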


\begin{proof}
Assumptions (i) and (ii) of Theorem \ref{generalresult} are clearly satisfied in view of \eqref{modpsishiftedgrunwald} and \eqref{derivativepsishiftedgrunwald}.

Proof of (iii):
As $$ \phi_h(k)=h^{-\alpha}\phi_1(hk),$$ it is sufficient to consider the case $h=1$, so let $\phi := \phi_1.$ That is,
\begin{equation*}
\phi (k)= \phi_{1} (k)	=(-1)^{q+1} \left(a e^{-ip_1 k}(1-e^{ik})^\alpha+(1-a) 2^{-\alpha}e^{-i{\alert 2}p_2 k}(1-e^{i2k})^\alpha\right)
\end{equation*}
and note that $\phi$ is $2 \pi$-periodic. By symmetry, we only need to consider the real part for $0 \leq  k \leq  \pi,$ and so using \eqref{realpartsymbol} and the double angle formula we have
\begin{equation*}
\label{rangemixed}
\Re(\phi(k))=  (-1)^{q+1} 2^{\alpha} \sin^\alpha \left(\frac{k}{2}\right) \left( a \cos A +(1-a)   \cos^\alpha \left(\frac{k}{2} \right) \cos B \right),
\end{equation*}
where $A=(\frac{\alpha}{2}-p_1)k - \frac{\alpha \pi}{2}$ and $B=(\alpha-2 p_2)k - \frac{\alpha \pi}{2}$.

We consider for $0\leq k \leq \pi,$ the function
$$F(k) = (-1)^{q+1} \left(a \cos A +(1-a)\cos B \right)$$
and show that $F(k)\le F(\pi)<0$. Then, since $a>0$ and $p_1$ is the optimal shift, if $\Re\left((1-a) \psi^{p_2}_h (k) \right)<0$, by \eqref{reEst}, $\Re(\phi(k))\le -a k^\alpha  2^\alpha\cos\left( (\frac{\alpha}{2}-p)\pi\right)/\pi^\alpha$. If $\Re\left((1-a) \psi^{p_2}_h (k) \right)>0$, we will have the estimate
$$\Re(\phi(k))\le  F(\pi) 2^{\alpha} \sin^\alpha \left(\frac{k}{2}\right)\le k^\alpha F(\pi)2^\alpha/\pi^\alpha.$$

An easy check shows that $F(\pi)<0$. It remains to show that $F'(k)>0$. Now, as in both cases $2p_1=p_2$ and $(1-a)(\alpha-2p_2)=-a \left(\frac\alpha 2-p_1\right)$,
\begin{equation*}\begin{split}
F'(k)=&(-1)^{q+1}\left(-a \left(\frac\alpha 2-p_1\right)\sin(A)-(1-a)\left(\alpha-2p_2\right)\sin(B)\right) \\= & C(\alpha)\left( \sin(A) -\sin(B) \right)= 2 C(\alpha) \cos\left( \frac{A+B}{2}\right) \sin\left(\frac{A-B}{2}\right)
\end{split}\end{equation*}
where $
C(\alpha)= -a(\frac{\alpha}{2}-p_1 )$, $\frac{A+B}{2}= (\frac{3 \alpha}{4}-p_1)k-\frac{\alpha \pi}{2}$, and $\frac{A-B}{2}=\frac{-\alpha k}{4}$. Checking the range of the arguments, the cosine factor is negative if $\alpha>1$ and positive if $\alpha<1$,  the sine factor is always negative, and hence $F'(k)>0$.
\end{proof}

\begin{theorem}\label{4.5}
Let $\phi_h (k)$ and $a$ be as in Proposition \ref{thm:mix}. If $0<\alpha<1$ and $a=2,$ or if $1<\alpha<2$ and $a=2-\frac{2}{\alpha},$ then $\{T_{e^{ t \phi_h}}\}_{t\ge 0}$ are semigroups on $L_1(\mathbb{R}_+),$ or $L_1(\mathbb{R}),$ respectively, that are uniformly bounded in $h$ and $t$, strongly continuous, and uniformly analytic in $h$.
\end{theorem}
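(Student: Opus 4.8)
The plan is to read off all four assertions from Proposition \ref{thm:mix} combined with Theorem \ref{generalresult}, in the same way Theorem \ref{grunwaldscsganalsg} was obtained from Proposition \ref{thm:spg} and Theorem \ref{generalresult}. First I would check that $\phi_h$ really is an $L_1$-multiplier and that $\{T_{e^{t\phi_h}}\}_{t\ge 0}$ is a genuine semigroup of bounded operators. Being, up to the sign $(-1)^{q+1}$, a finite linear combination of shifted Gr\"unwald operators $A^\alpha_{c_jh,p_j}$, and each such operator being convolution with a finite atomic measure (finite because $\sum_{m\ge 0}\bigl|\binom{\alpha}{m}\bigr|<\infty$ for $\alpha>0$), $T_{\phi_h}$ is a bounded operator on $L_1(\R)$. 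By Proposition \ref{thm:mix}, $\phi_h$ satisfies the hypotheses of Theorem \ref{generalresult} with $h$-independent constants; in particular $\Re\phi_h\le 0$ on $\R$, so $e^{t\phi_h}$ is a bounded, absolutely continuous, $2\pi$-periodic function lying in $W^1_{r,per}[-\pi,\pi]$ (as in the proof of Theorem \ref{generalresult}), whence $e^{t\phi_h}$ is an $L_1$-multiplier by Theorem \ref{thm:eta}. Since Fourier multiplication is multiplicative, $\{T_{e^{t\phi_h}}\}_{t\ge 0}$ is a semigroup and, both operators carrying the symbol $e^{t\phi_h}$, it coincides with the semigroup $e^{tT_{\phi_h}}$ generated by the bounded operator $T_{\phi_h}$; in particular it is uniformly continuous, hence strongly continuous (cf. the argument via \cite[Proposition 8.1.3]{olivebook} used in Theorem \ref{grunwaldscsganalsg}).

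For the uniform estimates I would apply Theorem \ref{generalresult} with $\psi=\phi_h$. Since Proposition \ref{thm:mix} furnishes the constants $c$, $C$, $C'$ independently of $h$, the constants $K$ and $M$ in conclusions (a) and (b) of Theorem \ref{generalresult} are also independent of $h$, giving $\|T_{e^{t\phi_h}}\|_{\mathcal B(L_1)}\le K$ for all $t\ge 0$, $h>0$, and $\|T_{\phi_h e^{t\phi_h}}\|_{\mathcal B(L_1)}\le M/t$ for all $t>0$, $h>0$. The latter, with $M$ independent of $h$, is precisely the asserted uniform analyticity, the generator being the bounded operator $T_{\phi_h}$. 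Alternatively one may first reduce to $h=1$: since $\phi_h(k)=h^{-\alpha}\phi_1(hk)$, the scaling invariance \eqref{multipliernormequivalence} gives $\|T_{e^{t\phi_h}}\|_{\mathcal B(L_1)}=\|T_{e^{(th^{-\alpha})\phi_1}}\|_{\mathcal B(L_1)}$ and likewise for $\phi_h e^{t\phi_h}$, so the $h$-uniform bounds follow from the $t$-uniform ones for $\phi_1$.

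The only remaining point is the domain $L_1(\R_+)$ in the case $0<\alpha<1$. There $p_1=p_2=0\le 0$, so by Remark \ref{rem:l1r+} each $A^\alpha_{c_jh,0}$, hence $T_{\phi_h}$, maps $L_1(\R_+)$ into itself; consequently so does $e^{tT_{\phi_h}}=T_{e^{t\phi_h}}$, and all the norm bounds above restrict to the invariant subspace $L_1(\R_+)$, on which strong continuity holds as well. I expect no genuine obstacle left at this stage: the one delicate ingredient, that $\phi_h$ satisfies assumption (iii) of Theorem \ref{generalresult} uniformly in $h$ (the sign analysis of $F$ and the two-case bound on $\Re\phi_h$), has already been carried out in Proposition \ref{thm:mix}; what remains here is the routine bookkeeping of assembling the multiplier $e^{t\phi_h}$, transferring the $h$-independent constants through Theorem \ref{generalresult}, and dealing with strong continuity and the $\R_+$-invariance.
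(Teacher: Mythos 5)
Your proposal is correct and follows exactly the paper's route: the paper's entire proof is the single sentence that the statement follows from Theorem \ref{generalresult} in view of Proposition \ref{thm:mix}. The additional details you supply (the multiplicativity of the symbol calculus, strong continuity from boundedness of the generator $T_{\phi_h}$, the scaling reduction to $h=1$, and the $L_1(\mathbb{R}_+)$-invariance via Remark \ref{rem:l1r+}) are all consistent with what the paper leaves implicit.
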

\begin{proof}
The statement follows from Theorem \ref{generalresult} in view of Proposition \ref{thm:mix}.
\end{proof}

\section{Application to fractional powers of operators}

Let $X$ be a Banach space and $-A$ be the generator of a strongly continuous group of bounded linear operators $\{G(t)\}_{t\in \mathbb{R}}$ on $X$ with $\|G(t)\|_{\mathcal{B}(X)}\le M$ for all $t\in \mathbb{R}$ for some $M\ge 1$.
If $\mu$ is a bounded Borel measure on $\mathbb{R}$ and if we set $\psi(z):=\hat{\mu}(z)=\int_{-\infty}^{\infty}e^{zs}\,d\mu(s)$, ($z=ik$), we may define the bounded linear operator
\begin{equation}\label{hp}
\psi(-A)x:=\int_{\mathbb{R}}G(s)x\,\dd \mu(s), ~x\in X.
\end{equation}
It is well known that the map $\psi\to \psi(-A)$ is an algebra homomorphism and is called the Hille-Phillips functional calculus, see for example, \cite{Kant}. That is, if $\psi=\hat{\mu}$ and $\phi=\hat{\nu}$, for some bounded Borel measures $\mu$ and $\nu$, then $(\phi+\psi)(-A)=\phi(-A)+\psi(-A)$, $(\phi\cdot \psi)(-A)=\phi(-A)\psi(-A)$ and $(c\phi)(-A)=c\phi(-A)$, $c\in \mathbb{C}$. A simple transference principle shows, see e.g. \cite[Theorem 3.1]{Baeumer2009a}, that,
\begin{equation}\label{trans}
\|\psi(-A)\|_{\mathcal{B}(X)}\le M\|T_{k\mapsto\psi(ik)}\|_{\mathcal{B}(L_1(\mathbb{R}))}.
\end{equation}
Note that if $\supp \mu\subset \mathbb{R}_+$, then we may take $-A$ to be the generator of a strongly continuous semigroup and the properties of the Hille-Phillips functional calculus \eqref{hp} and the transference principle \eqref{trans} still holds.

Let $2p-1 < \alpha < 2p+1,\quad \alpha \in \mathbb{R}^+, ~p\in \mathbb{N}$ and $\{\mu_t\}_{t\ge 0}$ be the family of Borel measures on $\mathbb{R}$ such that $\hat{\mu_t}(z)=e^{t(-1)^{p+1}(-z)^{\alpha}}$, $(z=ik)$. Then the operator family given by
\begin{equation*}\label{stable}
S_{\alpha}(t)x:=\int_{\mathbb{R}}G(s)x\,\dd \mu_t(s), ~x\in X, t\ge 0,
\end{equation*}
is a uniformly bounded (analytic) semigroup of bounded linear operators on $X$, see \cite[Theorems 4.1 and 4.6]{Baeumer2009a} for the group case. In case $0<\alpha<1$, we have $\supp \mu_t\subset  \R^+$ and hence $-A$ is allowed to be a semigroup generator and $G$ to be a strongly continuous semigroup and the analyticity of $S_{\alpha}$ holds, see \cite{Bala} and \cite{Yos}. The fractional power $A^{\alpha}$ of $A$ is then defined to be the generator of $S_\alpha$ multiplied by $(-1)^{p+1}$. We note that the fractional power of $A$ may be defined via an unbounded functional calculus for group generators (or, semigroup generators), formally given by $f_{\alpha}(-A)$, where $f_{\alpha}(z)=(-z)^{\alpha}$. This coincides with the definition given here for groups and, in case $0<\alpha<1$, for semigroups, see \cite{Baeumer2009a} and \cite{Bala} for more details. Thus, for the additional case of $-A$ being a semigroup generator and $\alpha>1$, we just set $A^{\alpha}=f_{\alpha}(-A)$ as in \cite{Bala}.

The following theorem shows the rate of convergence for the Gr\"unwald formula approximating fractional powers of operators in this general setting. For the sake of notational simplicity we only give the first order version; the higher order version follows exactly along the same lines and is discussed in Corollary \ref{highOrder}.

\begin{theorem}\label{groupthm}
Let $X$ be a Banach space and $p\in \mathbb{N}$. Assume that $-A$ is the generator of a strongly continuous group, in case $p=0$, semigroup, of uniformly bounded linear operators $\{G(t)\}_{t\in \mathbb{R}}$ on $X$. 
Define
$$
\Phi^p_{\alpha,h} x=h^{-\alpha}\sum_{m=0}^{\infty} (-1)^{m}{\binom{\alpha}{m} }G((m-p)h)x=(-1)^{q+1}\psi_{\alpha,h,p}(-A)x, ~x\in X,
$$
where $\psi_{\alpha,h,p}(z)$ is given by \eqref{grunwaldmultgeneral} and $\alpha \in \mathbb{R}^+$ such that $2q-1 < \alpha < 2q+1, ~q\in \mathbb{N}.$

Then, as $h>0$, we have
\begin{equation}\label{grungroup}
\|\Phi^p_{\alpha,h}x-A^{\alpha}x\|\le C h \|A^{\alpha+1}x\|, ~x\in \mathcal{D}(A^{\alpha+1}).
\end{equation}
Furthermore, if $p=q,$ then $(-1)^{p+1}\Phi^p_{\alpha,h}$ generate $\{S^p_{\alpha,h}(t)\}_{t\ge 0}$, strongly continuous semigroups of linear operators on $X$ that are uniformly bounded in $h>0$ and $t\ge 0$ and uniformly  analytic in
$h>0$; i.e., 
there is $M>0$ such that $\|S^p_{\alpha,h}(t)\|_{\mathcal{B}(X)}\le M$ and $\|\Phi^p_{\alpha,h}S^p_{\alpha,h}(t)\|_{\mathcal{B}(X)}\le Mt^{-1}$ for all $t,h>0$.
\end{theorem}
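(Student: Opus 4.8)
The plan is to deduce both assertions from the corresponding $L_1$ statements — Theorem \ref{maintheorem} with $\beta=1$ for the consistency estimate \eqref{grungroup}, and Theorem \ref{grunwaldscsganalsg} for generation, uniform boundedness and uniform analyticity — via the transference inequality \eqref{trans} and the multiplicativity of the Hille--Phillips calculus \eqref{hp}. The first thing to record is that $\Phi^p_{\alpha,h}$ is the Hille--Phillips operator attached to the bounded measure $\mu_h=h^{-\alpha}\sum_{m\ge 0}(-1)^m\binom{\alpha}{m}\delta_{(m-p)h}$ (of finite total variation since $\alpha>0$), whose Fourier--Stieltjes transform at $z=ik$ equals $h^{-\alpha}e^{-ikhp}(1-e^{ikh})^\alpha=(-ik)^\alpha\omega_{p,\alpha}(-ikh)$ by \eqref{fouriertransformomega}; thus $\Phi^p_{\alpha,h}$ is exactly the abstract analogue of the Gr\"unwald operator $T_{(-1)^{q+1}\psi^p_h}$ of \eqref{grunwaldmultipliersymbol}, and $A^\alpha=f_\alpha(-A)$ carries the symbol $(-ik)^\alpha$; for $p=0$ the measure $\mu_h$ is supported on $\R_+$, so a semigroup $G$ suffices.

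For the consistency estimate I would start from the scalar identity underlying \eqref{errorasconvolutionandproduct}, namely
\begin{equation*}
(-ik)^\alpha\omega_{p,\alpha}(-ikh)-(-ik)^\alpha=h\,\widehat{g}_{1,0,p}(kh)\,(-ik)^{\alpha+1},
\end{equation*}
where $\widehat{g}_{1,0,p}(k)=(\omega_{p,\alpha}(-ik)-1)/(-ik)$ is, by Lemma \ref{ghatandghatprimeinl2}, the Fourier transform of some $g_{1,0,p}\in L_1(\R)$ with $\supp g_{1,0,p}\subset[-p,\infty)\cup\R_+$. Let $R_h$ be the bounded operator $R_hx=\int_{\R}G(s)x\,h^{-1}g_{1,0,p}(s/h)\,\dd s$, which by \eqref{trans} and \eqref{multipliernormequivalence} satisfies $\|R_h\|_{\mathcal{B}(X)}\le M\|g_{1,0,p}\|_{L_1}$ uniformly in $h$ (and invokes only $G(s)$, $s\ge0$, when $p=0$). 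The key step is then to lift the scalar identity to the operator identity
\begin{equation*}
\Phi^p_{\alpha,h}x-A^\alpha x=h\,R_h\,A^{\alpha+1}x,\qquad x\in\mathcal{D}(A^{\alpha+1}),
\end{equation*}
from which \eqref{grungroup} follows with $C=M\|g_{1,0,p}\|_{L_1}$. The lift rests on the product rule of the (extended) Hille--Phillips calculus: since $R_h$ is bounded and commutes with $G$, and $A^{\alpha+1}=f_{\alpha+1}(-A)$ is closed, one has $(\widehat g_{1,0,p}(\cdot\,h)\cdot f_{\alpha+1})(-A)x=R_hA^{\alpha+1}x$ on $\mathcal{D}(A^{\alpha+1})$, and combined with $A^\alpha x=f_\alpha(-A)x$ for $x\in\mathcal{D}(A^{\alpha+1})\subset\mathcal{D}(A^\alpha)$ and the fact that on bounded symbols the extended calculus coincides with \eqref{hp}, the displayed identity follows. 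Alternatively one verifies it first on a core such as $\{\int\varphi(s)G(s)y\,\dd s:\varphi\in C_c^\infty(\R),\,y\in X\}$, where it reduces to the $L_1$ convolution identity from the proof of Theorem \ref{maintheorem}, and then extends by density in the graph norm of $A^{\alpha+1}$.

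For the second part, take $p=q$. Then $(-1)^{p+1}\Phi^p_{\alpha,h}$ is the Hille--Phillips operator with symbol $\psi_h(ik)=(-1)^{p+1}h^{-\alpha}e^{-ipkh}(1-e^{ikh})^\alpha$, i.e. precisely the $\psi_h$ of Theorem \ref{grunwaldscsganalsg}. Being bounded, it generates a norm-continuous, hence strongly continuous, semigroup $S^p_{\alpha,h}(t)=\exp\!\bigl(t(-1)^{p+1}\Phi^p_{\alpha,h}\bigr)$; by multiplicativity and total-variation continuity of the calculus, $S^p_{\alpha,h}(t)=(e^{t\psi_h})(-A)$ and $\Phi^p_{\alpha,h}S^p_{\alpha,h}(t)=(-1)^{p+1}(\psi_he^{t\psi_h})(-A)$. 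Applying \eqref{trans} and Theorem \ref{grunwaldscsganalsg} then yields $\|S^p_{\alpha,h}(t)\|_{\mathcal{B}(X)}\le M\|T_{e^{t\psi_h}}\|_{\mathcal{B}(L_1)}\le MK$ and $\|\Phi^p_{\alpha,h}S^p_{\alpha,h}(t)\|_{\mathcal{B}(X)}\le M\|T_{\psi_he^{t\psi_h}}\|_{\mathcal{B}(L_1)}\le M\widetilde M\,t^{-1}$, both uniform in $t,h>0$; in the case $q=0$ (so $0<\alpha<1$) all measures involved are supported on $\R_+$, so $G$ need only be a semigroup and one uses the $L_1(\R_+)$ part of Theorem \ref{grunwaldscsganalsg}.

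The main obstacle is the rigorous passage from the scalar multiplier identity to the operator identity on $\mathcal{D}(A^{\alpha+1})$: the symbol $\widehat g_{1,0,p}(\cdot\,h)(-ik)^{\alpha+1}$ is unbounded, so one must invoke (or establish) the compatibility between the bounded Hille--Phillips calculus and the unbounded functional calculus defining $A^\alpha$ and $A^{\alpha+1}$ — a point that is genuinely delicate precisely in the extended regime $\alpha>1$ with $-A$ only a semigroup generator, where one should lean on \cite{Bala,Baeumer2009a}. Everything else — generation, the uniform bounds, and the analyticity estimate — is a routine transfer of the $L_1$ results through \eqref{trans}, together with the elementary fact that a bounded generator produces a norm-continuous semigroup.
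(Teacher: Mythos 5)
Your proposal is correct and follows essentially the same route as the paper: the operator $R_h$ you construct is exactly the Hille--Phillips operator $\hat g(-hA)$ used in the paper's proof, the identity $\Phi^p_{\alpha,h}x-A^\alpha x=h\,\hat g(-hA)A^{\alpha+1}x$ on $\mathcal{D}(A^{\alpha+1})$ is the paper's key step (justified, as you anticipate, by the compatibility of the bounded Hille--Phillips calculus with the unbounded calculus of \cite{Baeumer2009a} and \cite{Bala}), and the generation, uniform boundedness and analyticity claims are obtained in both cases by transferring the $L_1$ estimates of Theorem \ref{grunwaldscsganalsg} through \eqref{trans}.
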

\begin{proof}
If $\hat{g}$ is defined by $\hat{g}(z)=\frac{\omega_{p,\alpha}(-z)-1}{-z},~\Re z \le 0$, then  by Lemma \ref{ghatandghatprimeinl2} with $N=0$ and $\beta=1$ and \eqref{multipliernormequivalence} we have that
$
\|T_{k\mapsto h\hat{g}(ikh)}\|_{\mathcal{B}(L_1(\mathbb{R}))}\le Ch
$.
In case $p=0$, we have that $\supp(g)\subset \mathbb{R}_+$ and hence $\|T_{k\mapsto h\hat{g}(ikh)}\|_{\mathcal{B}(L_1(\mathbb{R_+}))}\le Ch$.
Therefore, by the transference estimate \eqref{trans},
$
\|h\hat{g}(-hA)\|_{\mathcal{B}(X)}\le Ch
$
for some $C>0$.
Thus, if $x\in \mathcal{D}(A^{\alpha+1})$, then
$
\|h\hat{g}(-hA)A^{\alpha+1}x\|\le Ch\|A^{\alpha+1}x\|
$.
Using the unbounded functional calculus developed in \cite{Baeumer2009a} (in case $p=0$ see \cite{Bala}), we have, for $x\in \mathcal{D}(A^{\alpha+1})$,
\begin{align*}
h\hat{g}(-hA)A^{\alpha+1}x&=\left[\left. h\hat{g}(hz) (-z)^{\alpha+1}\right|_{z=-A}\right]x\\
&=\left[\left. \left(h^{-\alpha}\sum_{m=0}^{\infty} (-1)^{m}{\binom{\alpha}{m} }e^{(m-p)h(z)}-(-z)^{\alpha}\right)\right|_{z=-A}\right]x\\
&=\Phi^p_{\alpha,h}x-A^{\alpha}x
\end{align*}
and the proof of \eqref{grungroup} is complete.

The strong continuity of $S^p_{\alpha,h}$ follow from Theorem \ref{grunwaldscsganalsg} and \cite[Theorem 4.1]{Baeumer2009a}, where the latter theorem establishes the transference of strong continuity, from $L_1(\mathbb{R})$ to a general Banach space $X$ (see, \cite[Theorem 5.1]{Bala} for the same result in the unilateral case). Finally, the operator norm estimates follow from the $L_1$-norm estimates in Theorem \ref{grunwaldscsganalsg} and the transference estimate \eqref{trans}, noting that by the functional calculus of \cite{Baeumer2009a} and \cite{Bala} it follows that $(\psi_{\alpha,h,p} e^{t\psi_{\alpha,h,p}})(-A)=(-1)^{p+1}\Phi^p_{\alpha,h}S^p_{\alpha,h}(t)$ for $t>0$ where $\psi_{\alpha,h,p}$ is given by \eqref{grunwaldmultgeneral} and $p=q$.
\end{proof}

The stability and consistency estimates of Theorem \ref{groupthm} allow us to obtain unconditionally convergent numerical schemes for the associated Cauchy problem in the abstract setting together with error estimates. To demonstrate this, we use the optimally shifted first order Gr\"unwald scheme as ``spatial'' approximation together with a first order scheme for time stepping, the Backward (Implicit) Euler scheme, to match the spatial order. Let $2p-1 < \alpha < 2p+1,~\alpha \in \mathbb{R}^+, ~p\in \mathbb{N},$ let $X$ be a Banach space and $-A$ be the generator of a uniformly bounded strongly continuous group (semigroup if $p=0$) of operators on $X$ and set $A_\alpha:=(-1)^{p+1}A^{\alpha}$. Consider the abstract Cauchy problem
$$
\dot{u}(t)=A_{\alpha}u(t); ~u(0)=x,
$$
with solution operator $\{S_{\alpha}(t)\}_{t\ge 0}$, where, as we already mentioned, $S_\alpha$ is a uniformly bounded analytic semigroup as shown in \cite[Theorem 4.6]{Baeumer2009a} and in \cite{Yos} for $0<\alpha<1$; that is when $-A$ is a semigroup generator. For its numerical approximation set
$$
\frac{u_{n+1}-u_{n}}{\tau}=(-1)^{p+1}\Phi^p_{\alpha,h}u_{n+1};~u_0=x, n=0,1,2,...;
$$
that is, with $A_{\alpha,h}:=(-1)^{p+1}\Phi^p_{\alpha,h}$,
\begin{equation*}\label{alg}
u_n=(I-\tau A_{\alpha,h})^{-n}x, n=1,2,...
\end{equation*}
We have the following smooth data error estimate.
\begin{theorem}\label{thm:conv}
Let $2p-1 < \alpha < 2p+1,~\alpha \in \mathbb{R}^+ , ~p\in \mathbb{N},~n \in \mathbb{N},$ and $0<\varepsilon\le 1.$ Let $X$ be a Banach space and $-A$ be the generator of a uniformly bounded strongly continuous group (semigroup if $p=0$) of operators on $X$ and set $t=n \tau$. If $x\in \mathcal{D}(A^{1+\varepsilon})$, then
\begin{equation}\label{err1}
\|S_\alpha(t)x-u_n\|\le C(n^{-1}\|x\|+h\frac{\alpha t^{\frac{\varepsilon}{\alpha}}}{\varepsilon}\|A^{1+\varepsilon}x\|),~n=1,2,...; t>0,
\end{equation}
and, if $x\in \mathcal{D}(A)$, then
\begin{equation}\label{err2}
\|S_\alpha(t)x-u_n\|\le C(n^{-1}\|x\|+(1+\alpha)h\left|\log\frac{t}{h^{\alpha}}\right|\,\|Ax\|),~n=1,2,...; t>0.
\end{equation}
\end{theorem}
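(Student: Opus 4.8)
The plan is to split the total error into a spatial part (replacing $A^\alpha$ by its Gr\"unwald approximation $\Phi^p_{\alpha,h}$) and a temporal part (backward Euler applied to the approximating semigroup):
$$
\|S_\alpha(t)x-u_n\|\le \big\|\big(S_\alpha(t)-S^p_{\alpha,h}(t)\big)x\big\|+\big\|\big(S^p_{\alpha,h}(t)-(I-\tau A_{\alpha,h})^{-n}\big)x\big\|=:E_{\mathrm{sp}}+E_{\mathrm{t}},
$$
where $S^p_{\alpha,h}$ is the semigroup generated by $A_{\alpha,h}=(-1)^{p+1}\Phi^p_{\alpha,h}$ from Theorem \ref{groupthm}. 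For $E_{\mathrm t}$ I would invoke the classical estimate for the A-stable rational (implicit Euler) approximation of a bounded analytic semigroup: since Theorem \ref{groupthm} gives $\|S^p_{\alpha,h}(t)\|\le M$ and $\|A_{\alpha,h}S^p_{\alpha,h}(t)\|\le Mt^{-1}$ with $M$ \emph{independent of $h$}, the standard Cauchy-integral argument yields $E_{\mathrm t}\le Cn^{-1}\|x\|$ with $C$ depending only on $M$; this produces the $n^{-1}\|x\|$ term of both \eqref{err1} and \eqref{err2}, uniformly in $h$.

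For $E_{\mathrm{sp}}$ the engine is the Duhamel/telescoping identity
$$
\big(S_\alpha(t)-S^p_{\alpha,h}(t)\big)x=\int_0^t S^p_{\alpha,h}(t-s)\,(A_\alpha-A_{\alpha,h})\,S_\alpha(s)x\,\dd s,
$$
which is legitimate because $S_\alpha$, $S^p_{\alpha,h}$ and the powers of $A$ mutually commute through the Hille--Phillips calculus of Section 5, and because $S_\alpha(s)x\in\mathcal D(A^{\alpha+1})$ for $s>0$ by analyticity of $S_\alpha$. Bounding $\|S^p_{\alpha,h}(t-s)\|\le M$ and using the consistency estimate $\|(A_\alpha-A_{\alpha,h})y\|=\|A^\alpha y-\Phi^p_{\alpha,h}y\|\le Ch\|A^{\alpha+1}y\|$ of Theorem \ref{groupthm}, everything reduces to integrating $\|A^{\alpha+1}S_\alpha(s)x\|$. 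For \eqref{err1}, since $S_\alpha$ is generated by $\pm A^\alpha$ one has the analytic smoothing estimate $\|A^{\alpha-\varepsilon}S_\alpha(s)\|\le Cs^{-(\alpha-\varepsilon)/\alpha}$ (using $(A^\alpha)^{(\alpha-\varepsilon)/\alpha}=A^{\alpha-\varepsilon}$); commuting, $A^{\alpha+1}S_\alpha(s)x=A^{\alpha-\varepsilon}S_\alpha(s)A^{1+\varepsilon}x$, and therefore
$$
E_{\mathrm{sp}}\le MCh\int_0^t Cs^{\varepsilon/\alpha-1}\,\dd s\;\|A^{1+\varepsilon}x\|=Ch\,\tfrac{\alpha}{\varepsilon}\,t^{\varepsilon/\alpha}\|A^{1+\varepsilon}x\|,
$$
which is precisely \eqref{err1} (the integral converges as $\varepsilon/\alpha-1>-1$).

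For \eqref{err2} the integrand is only controlled by $Ch\|A^{\alpha+1}S_\alpha(s)x\|=Ch\|A^\alpha S_\alpha(s)(Ax)\|\le Chs^{-1}\|Ax\|$, which is not integrable at $s=0$, so I would split $\int_0^t=\int_0^{h^\alpha}+\int_{h^\alpha}^t$. On $(h^\alpha,t)$ the bound $Chs^{-1}\|Ax\|$ integrates to $Ch\log(t/h^\alpha)\|Ax\|$, giving the logarithm. On $(0,h^\alpha)$ I would use a \emph{lower-order} consistency estimate $\|(A_\alpha-A_{\alpha,h})y\|\le Ch^\beta\|A^{\alpha+\beta}y\|$ for a suitable $\beta<1$ --- this is the transference, via \eqref{trans} and Lemma \ref{ghatandghatprimeinl2}, of the $L_1$ bound of Theorem \ref{maintheorem}, i.e.\ the argument of Theorem \ref{groupthm} run with $0<\beta<1$ in place of $\beta=1$. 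Choosing $\beta$ with $\alpha+\beta\le1$ when $\alpha<1$ (so that $\|(A_\alpha-A_{\alpha,h})S_\alpha(s)x\|\le Ch^\beta\|A^{\alpha+\beta}S_\alpha(s)x\|\le Ch^\beta M\|Ax\|$) and any $\beta\in(0,1)$ otherwise (combined with $\|A^{\alpha+\beta-1}S_\alpha(s)\|\le Cs^{-(\alpha+\beta-1)/\alpha}$, whose exponent is $<1$) makes the integrand integrable near $0$ and gives $\int_0^{h^\alpha}\le Ch\|Ax\|$. Adding the two pieces yields $E_{\mathrm{sp}}\le Ch(1+\log(t/h^\alpha))\|Ax\|\le C(1+\alpha)h\,|\log(t/h^\alpha)|\,\|Ax\|$, and together with $E_{\mathrm t}\le Cn^{-1}\|x\|$ this is \eqref{err2}.

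The main obstacle is the near-$s=0$ portion of the Duhamel integral in the low-regularity case \eqref{err2}: one must exploit the natural scale $s\sim h^\alpha$ and balance the strong ($O(h)$, singular in $s$) consistency estimate against the weak ($O(h^\beta)$, integrable in $s$) one, and check that the relevant smoothing exponents $(\alpha-\varepsilon)/\alpha$ and $(\alpha+\beta-1)/\alpha$ stay below $1$. A secondary, more routine point is rigorously justifying the Duhamel identity and the various operator and fractional-power commutations within the Hille--Phillips calculus, and confirming that the constant in $E_{\mathrm t}\le Cn^{-1}\|x\|$ is truly $h$-independent --- which it is, precisely because Theorem \ref{groupthm} supplies the $h$-uniform analyticity bounds $\|S^p_{\alpha,h}(t)\|\le M$, $\|A_{\alpha,h}S^p_{\alpha,h}(t)\|\le Mt^{-1}$.
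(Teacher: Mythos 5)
Your proposal is correct and follows the paper's proof almost step for step: the same splitting into a spatial error $e_1=S_\alpha(t)x-S^p_{\alpha,h}(t)x$ and a temporal error $e_2$, the same appeal to the $h$-uniform analyticity from Theorem \ref{groupthm} together with the classical rational-approximation estimate (the paper cites Crouzeix--Larsson--Piskar\"ev--Thom\'ee) to get $\|e_2\|\le Cn^{-1}\|x\|$, the same Duhamel identity for $e_1$, and for \eqref{err1} the identical smoothing argument $A^{\alpha+1}S_\alpha(r)x=A_\alpha^{1-\varepsilon/\alpha}S_\alpha(r)A^{1+\varepsilon}x$ with $\|A_\alpha^{1-\varepsilon/\alpha}S_\alpha(r)\|\le Cr^{\varepsilon/\alpha-1}$. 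For \eqref{err2} you also split the Duhamel integral at $r=h^\alpha$ exactly as the paper does, and the tail $\int_{h^\alpha}^t$ is treated identically.

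The one place you genuinely diverge is the near-origin piece $\int_0^{h^\alpha}$. You propose an intermediate-order consistency bound $\|(A_\alpha-A_{\alpha,h})y\|\le Ch^\beta\|A^{\alpha+\beta}y\|$, obtained by transferring Theorem \ref{maintheorem} with $0<\beta<1$ via Lemma \ref{ghatandghatprimeinl2} and \eqref{trans}, and then balance $h^\beta$ against the smoothing exponent $(\alpha+\beta-1)/\alpha<1$ to recover $O(h)\|Ax\|$. This works (with the small caveat that for $\alpha<1$ you should take $\beta=1-\alpha$ exactly, since for $\alpha+\beta<1$ the bound $\|A^{\alpha+\beta}z\|\le C\|Az\|$ would also need $\|z\|$ via the moment inequality). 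The paper instead uses the cruder $h$-\emph{uniform} relative bound $\|(A_{\alpha,h}-A_\alpha)y\|\le C\|A_\alpha y\|$ (the $\beta=0$ stability coming from pointwise convergence of $A_{\alpha,h}$) combined with $\|A_\alpha^{1-1/\alpha}S_\alpha(r)Ax\|\le Cr^{1/\alpha-1}\|Ax\|$, whose integral over $(0,h^\alpha)$ is $\alpha h$. The paper's route avoids having to restate the fractional-order consistency estimate in the abstract setting; yours avoids invoking the $\beta=0$ stability and makes the $h$-scaling of the near-origin contribution more transparent. Both yield the same $Ch\|Ax\|$ term, so the proof is sound either way.
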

\begin{proof}
To show \eqref{err1}, we split the error as
$$
S_\alpha(t)x-u_n=S_\alpha(t)x-S_{\alpha,h}(t)x+S^p_{\alpha,h}(t)x-u_n:=e_1+e_2.
$$
It was shown in Theorem \ref{groupthm} that $S^p_{\alpha,h}$ are bounded analytic semigroups on $X$, uniformly in $h$. Therefore,
$\|e_2\|\le Cn^{-1}\|x\|,~ n\in \mathbb{N},$
as shown in \cite{Stig}, with $C$ independent of $h$ and $t$. To bound $e_1$ we use the fact that all operators appearing commute being functions of $A$, to write
\begin{align}\label{e_1}
e_1=S_\alpha(t)x-S^p_{\alpha,h}(t)=\int_0^t(A_{\alpha}-A_{\alpha,h})S_{\alpha}(r)S^p_{\alpha,h}(t-r)x\,\dd r
\end{align}
Note that the analyticity of the semigroup $S_{\alpha}$ implies that there is a constant $M$ such that for $0\le \varepsilon\le 1$,
the estimate $\|A_{\alpha}^{1-\varepsilon}S_{\alpha}(t)\|\le Mt^{\varepsilon-1}$ holds for all $t>0$. Then, by Theorem \ref{groupthm},
\begin{align*}
\|e_1\|&\le Ch\int_{0}^t \|A^{\alpha+1}S_{\alpha}(r)S^p_{\alpha,h}(t-r)x\|\,\dd r\\&=Ch\int_0^t\|A_{\alpha}^{1-\frac{\varepsilon}{\alpha}}S_{\alpha}(r)
S^p_{\alpha,h}(t-r)A^{1+\varepsilon}x\|\,\dd r
\le Ch \frac{\alpha t^{\frac{\varepsilon}{\alpha}}}{\varepsilon}\|A^{1+\varepsilon}x\|,
\end{align*}
which completes the proof of \eqref{err1}.

To show \eqref{err2}, write $e_1$ in \eqref{e_1} as
$$
e_1=\int_0^{h^{\alpha}}(A_{\alpha}-A_{\alpha,h})S_{\alpha}(r)S^p_{\alpha,h}(t-r)x\,\dd r+\int_{h^\alpha}^t(A_{\alpha}-A_{\alpha,h})S_{\alpha}(r)S^p_{\alpha,h}(t-r)x\,\dd r
$$
It is already known that $A_{\alpha,h}x\to A_{\alpha}x$ as $h\to 0+$ for all $x\in \mathcal{D}(A_\alpha)$, see \cite[Proposition 4.9]{Baeumer2009a} and \cite{Westphal1974}, and hence we have stability
$\|A_{\alpha,h}x- A_{\alpha}x\|\le C\|A_{\alpha}x\|$ for all $x\in \mathcal{D}(A_\alpha)$. Therefore,
\begin{align*}
\|e_1\|\le & C\int_0^{h^{\alpha}}\|A_{\alpha}^{1-\frac{1}{\alpha}}S_{\alpha}(r)Ax\|\,\dd r+ Ch\left|\int_{h^\alpha}^t\|A_{\alpha}S_{\alpha}(r)Ax\|\,\dd r\right|\\
\le & C(h \alpha+h\left|\log\frac{t}{h^{\alpha}}\right|)\|Ax\|.
\end{align*}
\end{proof}
Note that the condition $x\in \mathcal{D}(A^{1+\varepsilon})$ in \eqref{err1} might be hard to check for $\epsilon\neq 1$, depending on $A$ and the Banach space $X$. However, one can always use $\varepsilon=1$ as $\mathcal{D}(A^2)$ is usually quite explicit.
We also obtain convergence and error estimates of stable higher order schemes (such as the second order Gr\"unwald formulae introduced in Section \ref{subsec:2order}).

\begin{corollary}\label{highOrder}
Let $\alpha \in \mathbb{R}_+$ with $2q-1<\alpha<2q+1$, $q\in \mathbb{N}$ and let
$$
\Psi_{\alpha,h}:=(-1)^{q+1}\sum_{j=0}^{N} b_j \Phi^{p_j}_{\alpha,c_jh}= \sum_{j=0}^N b_j \psi_{\alpha,c_j h, p_j}(-A)
$$
be an $N+1$-order Gr\"unwald approximation, where $\psi_{\alpha, h, p}(z)$ is given by \eqref{grunwaldmultgeneral} and  $b_j,c_j,p_j$ are as defined in \eqref{higherOrderA}. Assume the multiplier
$
\sum_{j=0}^{N} b_j\psi^{p_j}_{c_jh}(k),
$
where $\psi^p_h (k)$ is given by \eqref{grunwaldmultipliersymbol}, satisfies (i)-(iii) of Theorem \ref{maintheorem} with constants independent of $h$. If one solves the Cauchy problem
$$
\dot{u}(t)=\Psi_{\alpha,h}u(t); ~u(0)=x,
$$
with a strongly $A$-stable Runge-Kutta method of with stage order $s$ and order $r\ge s+1$, then, denoting the discrete solution by $u_n$ at time level $t=n\tau$,
$$
\|S_{\alpha}(t)x-u_n\|\le C\left(n^{-r}\|x\|+h^{N+1}\left|\log\frac{t}{h^{\alpha}}\right|\,\|A^{N+1}x\|\right),~h>0,~t=n \tau,
$$
for all $x\in \mathcal{D}(A^{N+1}).$

\end{corollary}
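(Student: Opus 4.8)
The plan is to run the argument of Theorem~\ref{thm:conv}, estimate \eqref{err2}, with the optimally shifted first order Gr\"unwald scheme replaced by the given $(N+1)$-st order scheme and the Backward Euler method replaced by the Runge--Kutta method, so that the two constituent orders (one in space, one in time) become $N+1$ and $r$. Two preliminary facts carry over verbatim. Writing $\psi:=\sum_{j=0}^{N}b_j\psi^{p_j}_{c_jh}$ for the periodic multiplier to which $\Psi_{\alpha,h}$ corresponds under the Hille--Phillips calculus, and $A_\alpha=(-1)^{q+1}A^\alpha$, $A_{\alpha,h}=(-1)^{q+1}\Psi_{\alpha,h}$: first, since $\psi$ satisfies (i)--(iii) of Theorem~\ref{generalresult} with constants independent of $h$, Theorem~\ref{generalresult} gives $\|T_{e^{t\psi}}\|_{\mathcal{B}(L_1)}\le K$ and $\|T_{\psi e^{t\psi}}\|_{\mathcal{B}(L_1)}\le Mt^{-1}$ uniformly in $h$, and transferring these through \eqref{trans} exactly as in Theorem~\ref{groupthm} shows that $\Psi_{\alpha,h}$ generates strongly continuous semigroups $S_{\alpha,h}$ on $X$ that are uniformly bounded and uniformly analytic in $h$, with $\|A_{\alpha,h}S_{\alpha,h}(t)\|_{\mathcal B(X)}\le Mt^{-1}$; second, applying Corollary~\ref{HigherOrderConsistency} with parameter $\beta\in(0,1]$ and transferring through \eqref{trans} and the unbounded functional calculus (as in the proof of \eqref{grungroup}) yields the family of consistency estimates $\|(A_\alpha-A_{\alpha,h})y\|\le Ch^{N+\beta}\|A^{\alpha+N+\beta}y\|$ for $y\in\mathcal D(A^{\alpha+N+\beta})$, with $C$ independent of $h$.

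Next, split $S_\alpha(t)x-u_n=(S_\alpha(t)x-S_{\alpha,h}(t)x)+(S_{\alpha,h}(t)x-u_n)=:e_1+e_2$. Since the analyticity of $S_{\alpha,h}$ is uniform in $h$, the standard convergence theory of strongly $A$-stable Runge--Kutta methods of order $r$ and stage order $s$ with $r\ge s+1$, applied to the bounded analytic semigroup $S_{\alpha,h}$, gives $\|e_2\|\le Cn^{-r}\|x\|$ with $C$ independent of $h$ and $t$ (the higher order analogue of the Backward Euler estimate used for $e_2$ in Theorem~\ref{thm:conv}). All the operators involved are functions of $A$ and hence commute, so Duhamel's formula as in \eqref{e_1} gives $e_1=\int_0^t(A_\alpha-A_{\alpha,h})S_\alpha(r)S_{\alpha,h}(t-r)x\,\dd r$, which I split at $r=h^\alpha$. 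On $[h^\alpha,t]$, I apply the consistency estimate with $\beta=1$, factor $A^{\alpha+N+1}=\pm A_\alpha A^{N+1}$, move $A^{N+1}$ onto $x$ (legitimate since $x\in\mathcal D(A^{N+1})$), and use the smoothing bound $\|A_\alpha S_\alpha(r)\|_{\mathcal B(X)}\le Mr^{-1}$: the integrand is $\le Ch^{N+1}r^{-1}\|A^{N+1}x\|$, so this part of the integral is $\le Ch^{N+1}|\log(t/h^\alpha)|\,\|A^{N+1}x\|$. On $[0,h^\alpha]$ the factor $r^{-1}$ is not integrable, so instead I use the consistency estimate with a fixed $\beta\in(0,1)$ chosen so that $\alpha+\beta-1\ge 0$; factoring $A^{\alpha+N+\beta}=\pm A^{\alpha+\beta-1}A^{N+1}$, moving $A^{N+1}$ onto $x$ and using $\|A_\alpha^{(\alpha+\beta-1)/\alpha}S_\alpha(r)\|_{\mathcal B(X)}\le Mr^{-(\alpha+\beta-1)/\alpha}$ (the exponent lies in $[0,1)$), the integrand is $\le Ch^{N+\beta}r^{-(\alpha+\beta-1)/\alpha}\|A^{N+1}x\|$, and since $\int_0^{h^\alpha}r^{-(\alpha+\beta-1)/\alpha}\,\dd r = \tfrac{\alpha}{1-\beta}h^{1-\beta}$ this part is $\le C(1-\beta)^{-1}h^{N+1}\|A^{N+1}x\|$ (the case $t<h^\alpha$ is covered by applying this estimate on all of $[0,t]$). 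Adding the contributions to $e_1$ and $e_2$ gives the asserted bound.

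The main point requiring care is not a new idea but the genuine uniformity of all constants in $h$: one must check that the Runge--Kutta estimate $\|e_2\|\le Cn^{-r}\|x\|$ depends only on the analyticity data of $S_{\alpha,h}$, which is uniform in $h$ by the first step, and that the fractional-power smoothing constants in $\|A_\alpha^\theta S_\alpha(r)\|_{\mathcal B(X)}\le Mr^{-\theta}$ used for $e_1$ are those of the fixed limiting semigroup $S_\alpha$ rather than of the approximants. A secondary, routine point is the bookkeeping of fractional powers of $A_\alpha=(-1)^{q+1}A^\alpha$ (signs, and the identification of $A_\alpha^{\gamma/\alpha}$ with $A^\gamma$ up to a unimodular constant) and the verification that $S_\alpha(r)S_{\alpha,h}(t-r)x$ lies in the domains the consistency estimates require, which is automatic for $r>0$ by analyticity of $S_\alpha$; all of this is exactly as in the proof of Theorem~\ref{thm:conv}.
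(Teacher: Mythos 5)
Your proposal is correct and follows the same overall architecture as the paper's proof: first extend Theorem \ref{groupthm} to $\Psi_{\alpha,h}$ (uniform boundedness and uniform analyticity via Theorem \ref{generalresult} and the transference estimate \eqref{trans}, plus the order-$(N+1)$ consistency estimate via Corollary \ref{HigherOrderConsistency} and the functional calculus), then run the Duhamel/splitting argument of Theorem \ref{thm:conv} to get the spatial estimate \eqref{eq:spee}, and finally appeal to the Runge--Kutta convergence theory for analytic semigroups for the bound $\|e_2\|\le Cn^{-r}\|x\|$ (the paper makes this last step precise by citing \cite[Theorem 3.2]{LO}; you assert it as ``standard theory'', which is the same content). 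The one place you genuinely depart from the paper is the singular piece $\int_0^{h^\alpha}$ of the Duhamel integral: the paper's recipe, i.e.\ transcribing the proof of \eqref{err2}, uses the crude stability bound $\|(A_\alpha-A_{\alpha,h})y\|\le C\|A_\alpha y\|$ and harvests the power of $h$ from the length of the integration interval, which forces one to write $A_\alpha=A_\alpha^{1-(N+1)/\alpha}A_\alpha^{(N+1)/\alpha}$ and hence only works directly when $N+1\le\alpha$ (so that the smoothing exponent stays in $[0,1]$). You instead invoke the fractional consistency estimate of order $h^{N+\beta}$ with $\beta<1$ together with $\|A_\alpha^{(\alpha+\beta-1)/\alpha}S_\alpha(r)\|\le Mr^{-(\alpha+\beta-1)/\alpha}$, which produces the required $O(h^{N+1})$ contribution for every admissible $N$ and $\alpha$. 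This is a small but genuine gain in robustness over a literal transcription of the paper's argument; the remaining points you flag (uniformity of all constants in $h$, commutativity of functions of $A$, domain bookkeeping via analyticity) are handled exactly as in the paper.
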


\begin{proof}
It is straight forward to see that Theorem \ref{groupthm} holds for $\Psi_{\alpha,h}$; i.e.,
 \begin{equation*}
\|\Psi_{\alpha,h}x-A^{\alpha}x\|\le C h^{N+1} \|A^{\alpha+N+1}x\|, ~x\in \mathcal{D}(A^{\alpha+N+1}),
\end{equation*}
$\|e^{t\Psi_{\alpha,h}}\|_{\mathcal B(X)}\le M$ and $\|\Psi_{\alpha,h}e^{t\Psi_{\alpha,h}}\|_{\mathcal B(X)}\le M/t$.
 Following the proof of Theorem \ref{thm:conv} we obtain the  ``spatial'' error estimate
\begin{equation}\label{eq:spee}
\|S_\alpha(t)x-e^{t\Psi_{\alpha,h}}x\|\le Ch^{N+1}\left|\log\frac{t}{h^{\alpha}}\right|\,\|A^{N+1}x\|.
\end{equation}
Since the analyticity of the semigroups $e^{t\Psi_{\alpha,h}}$ is uniform in $h$ (the constant $M$ does not depend on $h$), the statement follows from \cite[Theorem 3.2]{LO} (see also \cite{LeRoux}).
\end{proof}

\begin{remark}
The error estimates \eqref{err1} and \eqref{err2} are almost optimal in terms of the regularity of the data. We conjecture that one could remove the slight growth in $t$ from \eqref{err1} or the logarithmic factor in \eqref{err2} by considering the $L_1$-case again and using the theory of Fourier multipliers on Besov spaces. Then use the transference principle to derive the abstract result. We do not pursue this issue here any further.
\end{remark}

\begin{remark}\label{rem:interp}
The convergence rate given in Corollary \ref{highOrder} can be extended using the stability estimate
$$
\|S_\alpha(t)x-e^{t\Psi_{\alpha,h}}x\|\le C\|x\|
$$
and \eqref{eq:spee} to certain real interpolation spaces as in \cite[Corollary 4.4]{kov}. We note that while the spaces $\mathcal{D}(A^s)$ endowed with the graph norm are, in general, not interpolation spaces, they are embedded within appropriate interpolation spaces (see, for example, \cite[Corollary 6.6.3]{haase}) and therefore we obtain
$$
\|S_{\alpha}(t)x-u_n\|\le C\left(n^{-r}\|x\|+h^{s}\left|\log\frac{t}{h^{\alpha}}\right|^{\frac{s}{N+1}}\,\left(\|A^{s}x\|+\|x\|\right)\right),~h>0,~t=n \tau,
$$
for all $x\in \mathcal{D}(A^{s}),~s\in [0,N+1]$. Also note that we indeed have convergence of $u_n$ to $S_\alpha(t)x$ for all $x\in X$ as $\tau\to 0$ and $h\to 0$ by Lax's Equivalence Theorem as a consequence of stability and consistency. The order of convergence, however, might be very low depending on $x$.
\end{remark}

\section{Numerical Experiments}

In this section we give the results of two numerical experiments. The first is to explore the effect of the regularity of the initial distribution on the rate of convergence as needed for Corollary \ref{highOrder},  the other is to see how well a second and third order scheme fare in the numerical experiment done by Tadjeran et. al \cite{Tadjeran2006}.

\begin{figure}[htb]
  \includegraphics[width=12 cm]{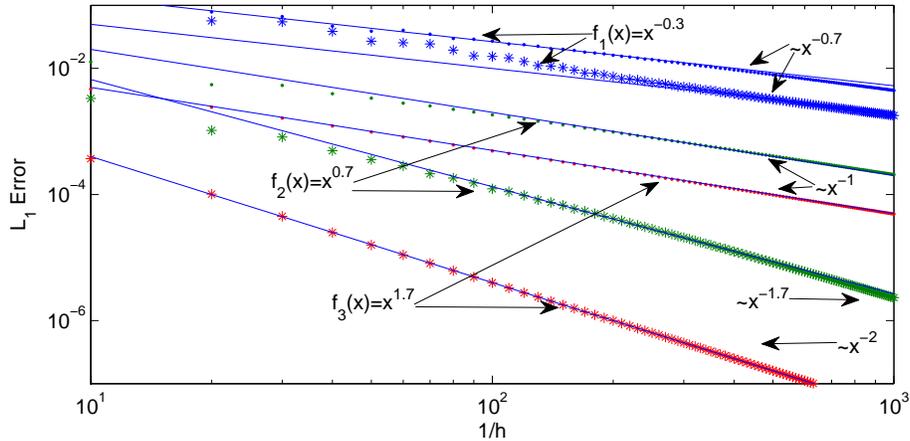}\\
  \caption{$L_1$-error for different initial conditions $f_i$ and a first ($\cdot$) and second order ($*$) scheme. Note the less than first order convergence for a ``bad'' initial condition; i.e. one that is not in the domain of $A$. Also note the less than second order convergence for a second order scheme but first order convergence for the first order scheme for an initial condition that is in the domain of $A$ but not in the domain of $A^2$.}\label{Fig:stable}
\end{figure}

\begin{example}
  We consider $X=L_1[0,1]$ and $A=(d/dx)$ with $$D(A)=\{f:f'\in L_1, f(0)=0\}$$ and $\alpha=0.8$. We approximate the solution to the Cauchy problems $$u'(t)=-A^\alpha u(t);u(0)=f_i, \;\; i=1,2,3$$ at $t=1$ with
  $$f_1(x)=x^{-0.3}, \;\;\;\;\;\;\; f_2(x)=x^{0.7},\;\;\;\;\;\;\; f_3(x)=x^{1.7},
   $$
with first and second order Gr\"unwald schemes (as in Proposition \ref{thm:mix}) as well as via a convolution of $f_i$ with an $\alpha$-stable density, which gives the exact solution but both the convolution and the density are computed numerically on a very fine grid. Note that $f_1\not\in D(A)$, $f_2\in D(A)$ but $f_2\not\in D(A^2)$ and $f_3\in D(A^2)$. However, $f_1\in D(A^{\beta_1})$ for $\beta_1<0.7$ and $f_2\in D(A^{\beta_2})$ for $\beta_2<1.7$. By Remark \ref{rem:interp} we expect about $0.7$-order convergence for both schemes in case of $u_0=f_1$, and first order convergence for the first order scheme for the other initial conditions. We expect about $1.7$-order convergence for the second order scheme in case of $u_0=f_2$ and second order convergence in case of $u_0=f_3$.
 For the temporal discretization we use MATLAB's ode45, a fourth-order Runge-Kutta method with a forced high degree of accuracy in order to investigate the pure spatial discretization error. We see in Figure \ref{Fig:stable} that we obtain the expected convergence in all cases.
\end{example}

\begin{example}
  Even though our theoretical framework is not directly applicable, because the fractional differential operator appearing in \eqref{tame} is defined on a finite domain with boundary conditions and has a multiplicative perturbation and hence it is not a fractional power of an auxiliary operator, we apply the second and third order approximations to the problem investigated by Tadjeran et al. \cite{Tadjeran2006}, namely approximating the solution to
  \begin{equation}\label{tame}
  \frac{\partial u(x,t)}{\partial t}=\frac{\Gamma(2.2)}{3!}x^{2.8}\frac{\partial^{1.8} u(x,t)}{\partial x^{1.8}}-(1+x)e^{-t}x^3; u(x,0)=x^3
  \end{equation}
  on the interval $[0,1]$ with boundary conditions $u(0,t)=0, u(1,t)=e^{-t}.$ The exact solution is given by $e^{-t}x^3$, which can be verified directly.

  A second order approximation of the fractional derivative is given by Proposition \ref{thm:mix}. In order to obtain a third order approximation we consider $$\phi_h(k)=a\psi^1_{h}+b\psi^{\frac 12}_{2h}+c\psi^0_{h}$$ with the coefficients $a,b$ and $c$ such that $\phi_h$ is a third order approximation; i.e.
  $$a=\frac{7-8\alpha+3\alpha^2}{3(\alpha-1)},\quad b=\frac{-7+3\alpha}{3(\alpha-1)},\quad c=1-a-b.$$ A quick plot of $\phi_h(k)$ for $k\in \R$ strengthens the conjecture that, for $\alpha=1.8$, the spectrum is in a sector in the left half plane and hence we expect stability and smoothing.

  We use again a fourth order Runge-Kutta method to solve the systems to $t=1$. Table 1 suggests that we indeed have second and third order convergence with respect to the spatial discretization parameter $\Delta x$.
 \begin{table}[htb]\label{Tab:Tad} \caption{Maximum error behaviour for second and third order Gr\"unwald approximations.}
  \begin{tabular}{|l|l|l|l|l|}
    \hline
    $\Delta x$ & Error $2^{nd}$ & Error rate & Error $3^{rd}$ & Error rate \\
    \hline
    $1/10$ & $6.825\times 10^{-5}$ & - & $9.180\times 10^{-6} $& - \\
    $1/15$ & $3.048\times 10^{-5}$ & $2.24\approx (15/10)^2 $& $1.933\times 10^{-6} $& $4.75>(15/10)^3$ \\
   $ 1/20$ & $1.708\times 10^{-5} $& $1.78\approx(20/15)^2 $& $7.825\times 10^{-7} $&$ 2.47\approx(20/15)^3 $\\
    $1/25$ & $1.088\times 10^{-5}$ & $1.57\approx(25/20)^2$ & $3.922\times 10^{-7}$ & $2\approx(25/20)^3$ \\
    \hline
  \end{tabular}
  \end{table}
\end{example}

\bibliographystyle{amsplain}

\providecommand{\bysame}{\leavevmode\hbox to3em{\hrulefill}\thinspace}
\providecommand{\MR}{\relax\ifhmode\unskip\space\fi MR }
\providecommand{\MRhref}[2]{%
  \href{http://www.ams.org/mathscinet-getitem?mr=#1}{#2}
}
\providecommand{\href}[2]{#2}

\end{document}